\documentclass[12pt]{amsart}
 \usepackage[margin=1.4in]{geometry}
 \usepackage[a4paper, left=1.3in, right=.3in]{}
 \usepackage{amsmath,amssymb,amsthm,graphicx,amsxtra,setspace}
 \usepackage[utf8]{inputenc}
 \usepackage{mathrsfs}
 \usepackage{hyperref}
 \usepackage{xcolor}
 \usepackage{upgreek}
 \usepackage{mathtools}
 \allowdisplaybreaks
 
 \newtheorem{theorem}{Theorem}[section]

 \newtheorem{definition}{Definition}[section]

 \usepackage{latexsym}
 \usepackage{hyperref}
 \usepackage{graphicx}
 \usepackage{epstopdf}
\usepackage{bigints}
\usepackage{siunitx}

\usepackage{amssymb,bbm,enumerate,bbm,amsmath}
\usepackage{changepage}

\usepackage{mathrsfs}
\usepackage{tikz}
\usepackage{ dsfont}
\usepackage{hyperref}
\usetikzlibrary{arrows}
\usepackage{amsthm}
\usepackage{array,amsfonts}
\usepackage{amsmath}
\usepackage[utf8]{inputenc}
\usepackage[T1]{fontenc}
\usepackage{mathtools}
\usepackage{enumitem}
\usepackage[thinc]{esdiff}
\usepackage{multirow}
\usepackage{graphics}
\usepackage{amsmath,bm}

\newtheorem{lemma}[theorem]{Lemma}
\newtheorem{corollary}[theorem]{Corollary}

\newtheorem{note}{Note}


 \allowdisplaybreaks
 
 \let\originalleft\left
 \let\originalright\right
 \renewcommand{\left}{\mathopen{}\mathclose\bgroup\originalleft}
 \renewcommand{\right}{\aftergroup\egroup\originalright}

 \newcommand{\Addresses}{{
 		\footnote{

 			\noindent	 \textsuperscript{1,2} Department of Mathematics, Indian Institute of Technology Roorkee, Roorkee, 247667, India.	
 			
 			\noindent  \textit{e-mail\textsuperscript{1}:} \texttt{p\_yadav@ma.iitr.ac.in}
 			
 			\noindent  \textit{e-mail\textsuperscript{2}:} \texttt{tanuja.srivastava@ma.iitr.ac.in}


 }}}
 
 \begin{document}
 	\title[]{Maximum Likelihood Estimation of the Parameters of Multivariate and Matrix Variate Symmetric Laplace Distribution \Addresses}
 	\author [Pooja Yadav.  Tanuja Srivastava]{Pooja Yadav\textsuperscript{1}.  Tanuja Srivastava\textsuperscript{2}}

\begin{abstract}
	This paper considers the matrix variate symmetric Laplace distribution, which is a scale mixture of the matrix normal distribution. In this paper, the maximum likelihood estimators (MLE) of the parameters of multivariate and matrix variate symmetric Laplace distributions are obtained with the help of the Expectation-Maximization (EM) algorithm. The parameters of the matrix variate symmetric Laplace distribution, along with their MLEs, are defined up to a positive multiplicative constant, and their Kronecker product is uniquely determined. The conditions for the existence of the MLEs are given. The performance of the obtained estimators are evaluated with respect to bias and mean Euclidean distance. In case of the multivariate symmetric Laplace distribution, the proposed estimator is also compared with another estimator given by \cite{EKL} and \cite{KS}. Furthermore, the empirical bias and the mean Euclidean distance of the Kronecker product for the estimators of the matrix variate symmetric Laplace distribution are analyzed using simulated data across different sample sizes. 
\end{abstract}
\maketitle

\section{Introduction}\label{sec1}  
	
	The Laplace distribution is a most helpful tool for modelling data that has sharp peaks at the location parameter and heavy tails, which are common in many real-world applications such as finance, biological sciences and engineering sciences, where the Laplace distribution provides better fits for the empirical data than the normal distribution (see \cite{FM}; \cite{KKP}; \cite{KP}; \cite{KPR}). The multivariate versions of the univariate Laplace distribution have been studied by many authors, all these versions are called the multivariate Laplace distribution. The term "\emph{multivariate Laplace law}" is now commonly used for symmetric or elliptically contoured distributions; these distributions possess the characteristic function depending on their variable through a quadratic form only (see \cite{CHS}). 
	
	The multivariate symmetric Laplace distribution is a specific case of the multivariate asymmetric Laplace distribution, where the location parameter is always assumed to be zero. The multivariate asymmetric Laplace distribution for modelling the skewed data is proposed by \cite{KPo}. The multivariate skew Laplace distribution, which is an alternative to the multivariate asymmetric Laplace distribution, was introduced by \cite{A}, using the variance-mean mixture of the multivariate normal distribution and the inverse gamma distribution. For more details about these two distributions, the readers are referred to \cite{A} and \cite{KKP}.
	
	\cite{BA} introduced the matrix variate skew Laplace distribution, which is a generalization of the multivariate skew Laplace distribution given by Arslan, using mean-variance mixtures of the normal distribution. They found the MLEs of the parameters of the matrix variate skew Laplace distribution using the EM algorithm and did a simulation study. 
	
	The matrix variate asymmetric Laplace distribution and matrix variate generalized asymmetric Laplace distribution are introduced by \cite{Y}, which are the extensions of multivariate asymmetric Laplace and multivariate generalized asymmetric Laplace distributions to the matrix variate case. Later, \cite{KMP} studied the matrix variate generalized asymmetric Laplace distribution using the univariate scale mixture of the matrix normal distribution and the matrix scale mixture of the matrix normal distribution. 
	
	The maximum likelihood estimators (MLEs) of the parameters of the multivariate asymmetric Laplace distribution are studied in \cite{EKL}. However, an estimate of the scale parameter is derived by taking the scale parameter as a diagonalizable matrix using the EM algorithm. Estimation of the parameters of the multivariate Laplace distribution is studied by using the other method in \cite{EKL}, \cite{KS} and \cite{Vi}.
	
	This paper considers an equivalent definition of the matrix variate symmetric Laplace distribution using the vectorization of the random matrix. In this paper, we explore the maximum likelihood estimation of the parameters of the matrix variate symmetric Laplace distribution. Deriving the closed-form expressions for the MLE of the parameters is not straightforward due to the presence of the modified Bessel function of the third kind in the probability density function. So, we employ the Expectation-Maximization (EM) algorithm to estimate the parameters and propose a simple iterative algorithm to compute the MLEs. Additionally, we present the EM algorithm for the maximum likelihood estimator of the scale parameter of the multivariate symmetric Laplace distribution. We also establish the necessary conditions for the existence of the MLEs for the parameters of both multivariate and matrix variate symmetric Laplace distributions. To demonstrate the performance of the proposed algorithm, we compared the EM estimator with another estimator of the multivariate symmetric Laplace distribution by evaluating the bias and mean Euclidean distance of these estimators. Additionally, we simulate the empirical bias and the mean Euclidean distance of the Kronecker product of estimators of the matrix variate symmetric Laplace distribution.
	
	This work may be a valuable addition to the application, where matrix variate and multivariate symmetric Laplace distributions are suitable probabilistic tools. One of the most direct applications of the matrix variate symmetric Laplace distribution is in panel data. This data is commonly used in economics and finance (see \cite{KMP}). In the case of limited data availability, the matrix variate symmetric Laplace distribution can be used in place of the multivariate symmetric Laplace distribution, where the scale parameter matrix is a Kronecker product of two positive definite matrices.

	The paper is organized as follows, in section $2$, an alternative definition of the matrix variate symmetric Laplace distribution is given and from this definition, the probability density function is derived and some preliminary results such as, the characteristic function and representation of the multivariate and matrix variate symmetric Laplace distributions are given. In section $3$, the MLE of parameters using the EM algorithm is proposed, which is in the form of a simple iterative algorithm. In section $4$, a necessary and sufficient condition for the existence of MLE of the parameters for both multivariate and matrix variate symmetric Laplace distribution are established. In section $5$, the performance of the proposed EM estimator is compared with another estimator of the multivariate symmetric Laplace distribution by evaluating bias and mean Euclidean distance of these estimators and, the empirical bias and the mean Euclidean distance of the Kronecker product of the estimators of the matrix variate symmetric Laplace distribution are shown using the simulation. Section $6$ contains the conclusion of the paper.

	\section{Preliminaries}
	\noindent\textbf{\emph{Notations}.} The following notations are used throughout the paper: $\mathcal{N}_{p}(\bm{0},\bm{\Sigma})$ denotes the $p$-dimensional multivariate normal distribution where $\bm{0}$ is a $p$-dimensional vector with zero entries, and $\bm{\Sigma}$ is a $p \times p$ positive definite matrix. $\operatorname{tr}(\bm{A})$ and $\begin{vmatrix} \bm{A}	\end{vmatrix}$ denotes the trace and  the determinant of the matrix $\bm{A}$, respectively. If $\bm{A}$ is a matrix, then $\operatorname{diag}(\bm{A})$ is a diagonal matrix with only diagonal elements of $\bm{A}$. If $\bm{A}$ is a matrix of order $m\times n$, then the column-wise vectorization of matrix $\bm{A}$ of order $mn\times 1$ is denoted as $\emph{vec}(\bm{A})$. $\bm{A}\otimes \bm{B}$ denotes the Kronecker product of matrices $\bm{A}$ and $\bm{B}$. $\bm{A}^\top$ denotes the transpose of the matrix $\bm{A}$. The notation $\mathcal{MN}_{p,q}(\bm{0}, \bm{\Sigma}_{1}, \bm{\Sigma}_{2})$ is used for matrix variate normal distribution, where $\bm{0}$ is a matrix of order $p\times q$ with all entries zero and $\bm{\Sigma}_{1}, \bm{\Sigma}_{2}$ are positive definite matrices of order $p \times p$ and $q\times q$, respectively. The notation ${Exp}(1)$ is used for the univariate exponential distribution with location parameter $0$ and scale parameter $1$. The notations $\mathcal{SL}_{p}(\bm{\Sigma})$ and $\mathcal{MSL}_{p,q}(\bm{\Sigma}_{1},\bm{\Sigma}_{2})$ are used for $p$-dimensional multivariate symmetric Laplace distribution and matrix variate symmetric Laplace distribution, respectively. $\|.\|_{2}$ denotes Euclidean norm or Frobenius norm of matrices. 
	
	\subsection{Multivariate symmetric Laplace distribution}
	The density function of a $p$-dimensional symmetric Laplace distributed random vector $ \bm{Y}= (y_{1},y_{2},\cdots,y_{p})^{\top}$, $\bm{Y} \in \mathbb{R}^p$ with location parameter zero and scale parameter $\bm{\Sigma}_ {p\times p}$ (positive definite matrix), is
	\begin{equation}
		\label{eq:1}
		f_{\bm{Y}}(\bm{y}) = \frac{2}{(2\pi)^{\frac{p}{2}} \begin{vmatrix} \bm{\Sigma} \end{vmatrix}^{\frac{1}{2}}} \left( \frac{\bm{y}^\top \bm{\Sigma}^{-1} \bm{y}}{2} \right) ^{\nu/2}  K_{\nu} \left( \sqrt{2\bm{y}^\top \bm{\Sigma}^{-1} \bm{y}} \right),
	\end{equation}
	here, $\nu = \frac{2-p}{2}$, and $ K_{\nu}$ is the modified Bessel function of the third kind. This distribution is denoted as $\bm{Y} \sim \mathcal{SL}_{p}(\bm{\Sigma})$ (see \cite{KKP}). The readers are referred to \cite{B}, \cite{OLBC}, \cite{W} for the definition and properties of the modified Bessel function of the third kind. 
	
	\begin{theorem}
		\label{thm:1}
		 A multivariate symmetric Laplace random variable $\bm{Y}$ has the representation 
		\begin{equation}
			\label{eq:2} 		
			\bm{Y}=\sqrt{W}\bm{Z},
		\end{equation}
		with random variable $\bm{Z}\sim \mathcal{N}_{p}(\bm{0}, \bm{\Sigma})$, the $p$-dimensional normal distribution with location parameter zero and scale parameter $\bm{\Sigma}$ and random variable $W$, independent of $\bm{Z}$, having a univariate exponential distribution with location parameter zero and scale parameter one.
	\end{theorem}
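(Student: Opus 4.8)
The plan is to verify the stochastic representation by showing that the density of $\sqrt{W}\bm{Z}$ coincides with the density in \eqref{eq:1}. First I would condition on $W$: since $\bm{Z}\sim\mathcal{N}_{p}(\bm{0},\bm{\Sigma})$ and $W$ is independent of $\bm{Z}$, the conditional law of $\bm{Y}=\sqrt{W}\bm{Z}$ given $W=w$ is $\mathcal{N}_{p}(\bm{0},w\bm{\Sigma})$. Writing the exponential density $f_{W}(w)=e^{-w}$ for $w>0$ and integrating out $W$ by the law of total probability gives
\begin{equation*}
	f_{\bm{Y}}(\bm{y})=\int_{0}^{\infty}\frac{1}{(2\pi w)^{p/2}\,\begin{vmatrix}\bm{\Sigma}\end{vmatrix}^{1/2}}\exp\left(-\frac{\bm{y}^{\top}\bm{\Sigma}^{-1}\bm{y}}{2w}\right)e^{-w}\,dw.
\end{equation*}

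Next I would evaluate this integral using the standard integral representation of the modified Bessel function of the third kind,
\begin{equation*}
	\int_{0}^{\infty}x^{\nu-1}\exp\left(-\frac{\beta}{x}-\gamma x\right)dx=2\left(\frac{\beta}{\gamma}\right)^{\nu/2}K_{\nu}\left(2\sqrt{\beta\gamma}\right),\qquad \beta,\gamma>0,
\end{equation*}
applied with $\nu=\frac{2-p}{2}$, $\beta=\frac{\bm{y}^{\top}\bm{\Sigma}^{-1}\bm{y}}{2}$, and $\gamma=1$; note the exponent of $w$ matches since $\nu-1=-p/2$. Substituting, using $(\beta/\gamma)^{\nu/2}=\beta^{\nu/2}$, and simplifying $2\sqrt{\beta}=\sqrt{2\bm{y}^{\top}\bm{\Sigma}^{-1}\bm{y}}$ recovers exactly \eqref{eq:1}, which proves the claim.

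As an alternative (and cross-check) I would mention the characteristic-function route: conditioning on $W$ gives $E[e^{i\bm{t}^{\top}\bm{Y}}]=E_{W}\!\left[e^{-\frac{W}{2}\bm{t}^{\top}\bm{\Sigma}\bm{t}}\right]=\left(1+\tfrac{1}{2}\bm{t}^{\top}\bm{\Sigma}\bm{t}\right)^{-1}$, which is the known characteristic function of $\mathcal{SL}_{p}(\bm{\Sigma})$, so uniqueness of characteristic functions finishes the argument. The only point needing care is the legitimacy of the Bessel integral identity above (equivalently, Fubini/Tonelli for the interchange of the $W$-expectation with the Gaussian integral, where for $\bm{y}=\bm{0}$ and $p\geq 2$ the integrand has an integrable singularity at $w=0$); I expect this to be the main technical obstacle, and the cleanest treatment is to quote the integral representation directly from \cite{B} or \cite{OLBC}. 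Everything else is a direct computation.
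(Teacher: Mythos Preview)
Your proposal is correct and follows essentially the same route as the paper: both compute the marginal density of $\bm{Y}$ by integrating out $W$ and then identify the resulting integral via the standard integral representation of $K_{\nu}$. The only cosmetic differences are that the paper reaches the mixture integral through a Jacobian computation for the transformation $(\bm{Z},W)\mapsto(\bm{Y},W)$ rather than by conditioning, and it explicitly invokes the symmetry $K_{-\nu}=K_{\nu}$ at the end; your additional characteristic-function cross-check is not in the paper's proof but is a perfectly valid alternative.
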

    \begin{proof}
    	Since $W$ and $\bm{Z}$ are independent and $\bm{Y}=\sqrt{W}\bm{Z}$, then, the joint probability density function of $\bm{Y}$ and $W$ is
   	\begin{align*}
   		f_{\bm{Y},W}(\bm{y},w)&= f_{\bm{Z},W}(\bm{z},w) \left| J \right| \\
   		&= f_{\bm{Z}}(\bm{z}) f_{W}(w) \left| J \right|,
   	\end{align*}
   	where $J = \begin{vmatrix} \frac{\partial(\bm{Z},W)}{\partial(\bm{Y},W)}	\end{vmatrix} =\frac{1}{(W)^{\frac{p}{2}}}$ is the jacobian. Hence, the joint density function of $\bm{Y}$ and $W$ is
   	
   	\begin{equation}
   		\label{eq:1.1}
   		f_{\bm{Y},W}(\bm{y},w)= \frac{1}{(2\pi)^{\frac{p}{2}} \begin{vmatrix} \bm{\Sigma} \end{vmatrix}^{\frac{1}{2}}  w^\frac{p}{2}} \exp\left(-w-\frac{1}{2w} \bm{y}^\top  \bm{\Sigma}^{-1} \bm{y}\right),
   	\end{equation}
   and the density function of $\bm{Y}$ is
   
   \begin{align*}
   		f_{\bm{Y}}(\bm{y}) &= \int_{0}^{\infty} \frac{1}{(2\pi)^{\frac{p}{2}} \begin{vmatrix} \bm{\Sigma} \end{vmatrix}^{\frac{1}{2}}  w^\frac{p}{2}} \exp\left(-w-\frac{1}{2w} \bm{y}^\top  \bm{\Sigma}^{-1} \bm{y}\right) \,dw\\
   		                                &=\frac{1}{(2\pi)^{\frac{p}{2}} \begin{vmatrix} \bm{\Sigma} \end{vmatrix}^{\frac{1}{2}} } \bigintsss_{0}^{\infty} \frac{1}{\left(w^{\frac{p-2}{2}+1}\right)}  \exp\left(-w-\frac{\left(\sqrt{2\bm{y}^\top  \bm{\Sigma}^{-1} \bm{y}}\right)^{2}}{4w} \right) \,dw \\
   		                                &= \frac{2}{(2\pi)^{\frac{p}{2}} \begin{vmatrix} \bm{\Sigma} \end{vmatrix}^{\frac{1}{2}} } \left(\frac{\bm{y}^\top  \bm{\Sigma}^{-1} \bm{y}}{2}\right)^{\nu/2}  K_{-\nu}\left(\sqrt{2\bm{y}^\top  \bm{\Sigma}^{-1} \bm{y}}\right),
   \end{align*}
  where $\nu=\frac{2-p}{2}$, and $K_{-\nu}(x)$ is the modified Bessel function of the third kind given as
  
  \begin{equation*}
  	K_{-\nu}(x)=\frac{1}{2}\left(\frac{x}{2}\right)^{-\nu} \int_{0}^{\infty} \frac{1}{(t)^{-\nu+1}} \exp\left(-t-\frac{x^2}{4t}\right) \,dt .
  \end{equation*}

  From the properties of modified Bessel function of the third kind, $K_{-\nu}(x)=K_{\nu}(x)$. Hence, the density function of $\bm{Y}$ is
\begin{equation*}
	f_{\bm{Y}}(\bm{y})=\frac{2}{(2\pi)^{\frac{p}{2}} \begin{vmatrix} \bm{\Sigma} \end{vmatrix}^{\frac{1}{2}} } \left(\frac{\bm{y}^\top  \bm{\Sigma}^{-1} \bm{y}}{2}\right)^{\nu/2}  K_{\nu}\left(\sqrt{2\bm{y}^\top  \bm{\Sigma}^{-1} \bm{y}}\right).
\end{equation*}
   \end{proof}
	
	\begin{lemma}
   \label{lemma:1}
        If the random vector $\bm{Y}$ and random variable $W$ have a joint density function $f_{\bm{Y},W}(\bm{y},w)$ given in equation \eqref{eq:1.1}, then the conditional expectation of $\frac{1}{W}$ given $\bm{Y}$ is 
   		\begin{equation*}
   				E\left(\frac{1}{W} \mid \bm{Y}=\bm{y} \right) = \left( \frac{\bm{y}^\top \bm{\Sigma}^{-1} \bm{y}}{2} \right) ^{-\frac{1}{2}} \frac{K_{\nu-1} \left( \sqrt{2 \left({\bm{y}^\top \bm{\Sigma}^{-1} \bm{y}} \right)}  \right) }{K_{\nu} \left( \sqrt{2 \left({\bm{y}}^\top \bm{\Sigma}^{-1} \bm{y}\right)}  \right) }.
   		\end{equation*}
 	\end{lemma}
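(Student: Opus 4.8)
The plan is to compute the conditional expectation directly from the definition as a ratio of integrals, reusing the Bessel-function machinery already set up in the proof of Theorem~\ref{thm:1}. Since $f_{W\mid\bm{Y}}(w\mid\bm{y})=f_{\bm{Y},W}(\bm{y},w)/f_{\bm{Y}}(\bm{y})$, I would first write
\[
E\!\left(\tfrac{1}{W}\,\middle|\,\bm{Y}=\bm{y}\right)=\frac{1}{f_{\bm{Y}}(\bm{y})}\int_{0}^{\infty}\frac{1}{w}\,f_{\bm{Y},W}(\bm{y},w)\,dw ,
\]
with $f_{\bm{Y},W}$ given by \eqref{eq:1.1} and $f_{\bm{Y}}$ given by \eqref{eq:1} (equivalently Theorem~\ref{thm:1}).

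Next I would evaluate the numerator. Inserting \eqref{eq:1.1} gives
\[
\int_{0}^{\infty}\frac{1}{w}\cdot\frac{1}{(2\pi)^{\frac{p}{2}}\begin{vmatrix}\bm{\Sigma}\end{vmatrix}^{\frac{1}{2}}w^{\frac{p}{2}}}\exp\!\left(-w-\frac{1}{2w}\bm{y}^{\top}\bm{\Sigma}^{-1}\bm{y}\right)dw ,
\]
which is structurally identical to the integral computed in Theorem~\ref{thm:1}, except that the power of $w$ in the denominator is raised from $w^{p/2}$ to $w^{p/2+1}$. Applying the same change of variables and the integral representation
\[
K_{\mu}(x)=\frac{1}{2}\left(\frac{x}{2}\right)^{-\mu}\int_{0}^{\infty}\frac{1}{t^{-\mu+1}}\exp\!\left(-t-\frac{x^{2}}{4t}\right)dt ,
\]
the extra factor $1/w$ shifts the relevant index from $\nu=\frac{2-p}{2}$ to $\nu-1$, so the numerator becomes
\[
\frac{2}{(2\pi)^{\frac{p}{2}}\begin{vmatrix}\bm{\Sigma}\end{vmatrix}^{\frac{1}{2}}}\left(\frac{\bm{y}^{\top}\bm{\Sigma}^{-1}\bm{y}}{2}\right)^{(\nu-1)/2}K_{\nu-1}\!\left(\sqrt{2\,\bm{y}^{\top}\bm{\Sigma}^{-1}\bm{y}}\right),
\]
after using $K_{-(\nu-1)}=K_{\nu-1}$ to fix the sign of the index.

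Finally I would divide by $f_{\bm{Y}}(\bm{y})=\frac{2}{(2\pi)^{p/2}\begin{vmatrix}\bm{\Sigma}\end{vmatrix}^{1/2}}\left(\frac{\bm{y}^{\top}\bm{\Sigma}^{-1}\bm{y}}{2}\right)^{\nu/2}K_{\nu}\!\left(\sqrt{2\,\bm{y}^{\top}\bm{\Sigma}^{-1}\bm{y}}\right)$. The normalizing constants cancel, the powers of $\frac{\bm{y}^{\top}\bm{\Sigma}^{-1}\bm{y}}{2}$ combine to exponent $\frac{\nu-1}{2}-\frac{\nu}{2}=-\frac12$, and the Bessel functions leave the ratio $K_{\nu-1}/K_{\nu}$, yielding exactly the claimed formula. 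I do not anticipate a serious obstacle here; the only points requiring care are the bookkeeping of the exponent of $w$ (hence the index shift $\nu\mapsto\nu-1$) and justifying the interchange of integration implicit in writing the conditional expectation as an integral against $f_{W\mid\bm{Y}}$, which is immediate since the integrand is nonnegative. One should also implicitly assume $\bm{y}\neq\bm{0}$ so that the quadratic form is positive and all expressions are well defined.
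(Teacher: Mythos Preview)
Your proposal is correct and follows essentially the same route as the paper: the paper first writes out the conditional density $f_{W\mid\bm{Y}}(w)=f_{\bm{Y},W}(\bm{y},w)/f_{\bm{Y}}(\bm{y})$ and then integrates $1/w$ against it, which is exactly your ratio-of-integrals computation with the Bessel index shifted from $\nu$ to $\nu-1$. If anything, you are more explicit than the paper about why the extra factor $1/w$ produces the index shift; the paper simply states the final answer after writing down the conditional density.
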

 
   \begin{proof}
   		The conditional distribution of $W$ given $\bm{Y}$ is required to find the conditional expectation. The density function of the conditional distribution of $W$ given $\bm{Y}$ is 
   	
   	\begin{align*}   
   		f_{W|\bm{Y}}(w)&=  \frac{f_{\bm{Y},W}(\bm{y},w) }{f_{\bm{Y}}(\bm{y})}\\
   		&=\frac{\exp\left(-w-\frac{1}{2w}\bm{y}^\top \bm{\Sigma}^{-1}\bm{y} \right)}{2 w^{\frac{p}{2}} K_{\nu} \left( \sqrt{2 ({\bm{y}}^\top \bm{\Sigma}^{-1} \bm{y})}  \right) } \left( \frac{\bm{y}^\top \bm{\Sigma}^{-1} \bm{y}}{2} \right) ^{-\nu/2}.	
   	\end{align*}
   	
   	From this density function, the conditional expectation is
   	\begin{align*}
   		E\left(\frac{1}{W} \mid \bm{Y} \right) &= \int_{0}^{\infty} \frac{1}{w} \, f_{W|\bm{Y}} \left(w \right) \, dw \\
   		&=  \left( \frac{\bm{y}^\top \bm{\Sigma}^{-1} \bm{y}}{2} \right) ^{-\frac{1}{2}} \frac{K_{\nu-1} \left( \sqrt{2 \left({\bm{y}^\top \bm{\Sigma}^{-1} \bm{y}} \right)}  \right) }{K_{\nu} \left( \sqrt{2 \left({\bm{y}}^\top \bm{\Sigma}^{-1} \bm{y}\right)}  \right) }.
   	\end{align*}
   \end{proof}

	\subsection{\textbf{Matrix variate symmetric Laplace distribution: definition and properties}}
	
	The matrix variate symmetric Laplace distribution is an extension of multivariate symmetric Laplace distribution to matrix variate case. The matrix variate symmetric Laplace distribution is a special case of the matrix variate asymmetric Laplace distribution. The matrix variate asymmetric Laplace distribution is studied by \cite{KMP} and \cite{Y}.
	
	In this subsection, we reconsider the matrix variate symmetric Laplace distribution and its properties, such as the characteristic function and representation, as given by \cite{Y}. We provide an equivalent definition of the matrix variate symmetric Laplace distribution using the vectorization of the random matrix, similar to the approach taken for the matrix normal distribution discussed by \cite{GN}. Based on this definition, we have derived the probability density function of the matrix variate symmetric Laplace distribution. Also, this definition is useful in the multivariate setting, which will be addressed in further sections. 
	\begin{definition}[\textbf{Matrix variate symmetric Laplace distribution}]
		\label{def:1}
		A random matrix $\bm{X}$ of order $p\times q$ is said to have a matrix variate symmetric Laplace distribution with parameters $\bm{\Sigma}_{1}\in \mathbb{R}^{p\times p}$ and $\bm{\Sigma}_{2}\in \mathbb{R}^{q\times q}$ (positive definite matrices) if $vec(\bm{X})\sim \mathcal{SL}_{pq}(\bm{\Sigma}_{2} \otimes \bm{\Sigma}_{1})$. This distribution is denoted as $\bm{X} \sim \mathcal{MSL}_{p,q}(\bm{\Sigma}_{1},\bm{\Sigma}_{2})$.
	\end{definition} 
	Here, the term "symmetric" refers to the elliptically contoured distributions. For more details about matrix variate elliptically contoured distribution, see \cite{VA}.
	
	In the following theorem, the probability density function of the random matrix $\bm{X}\sim \mathcal{MSL}_{p,q}(\bm{\Sigma}_{1}, \bm{\Sigma}_{2})$ is derived.
	
	\begin{theorem}[\textbf{Probability density function}]
		If $\bm{X} \sim \mathcal{MSL}_{p,q}(\bm{\Sigma}_{1},\bm{\Sigma}_{2})$, then the probability density function of $\bm{X}$ is
		\begin{multline}
			\label{eq:3}
			f_{\bm{X}}(\bm{x})=\frac{2}{(2\pi)^\frac{pq}{2} \begin{vmatrix} \bm{\Sigma}_{2}\end{vmatrix}^{p/2} \begin{vmatrix} \bm{\Sigma}_{1}\end{vmatrix}^{q/2}} \left(\frac{\operatorname{tr}\left(\bm{\Sigma}_{2}^{-1} \bm{x}^\top \bm{\Sigma}_{1}^{-1} \bm{x}\right)} {2}\right)^{\frac{\nu}{2}} \\  K_{\nu} \left( \sqrt{ 2 \operatorname{tr}\left(\bm{\Sigma}_{2}^{-1} \bm{x}^\top \bm{\Sigma}_{1}^{-1} \bm{x}\right)}\right),
		\end{multline}
		where $\nu =\frac{2-pq}{2}$ and $K_{\nu}$ is the modified Bessel function of the third kind.
	\end{theorem}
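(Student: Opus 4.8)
The plan is to deduce the density directly from the multivariate formula \eqref{eq:1} using Definition \ref{def:1}. By that definition, $\bm{X}\sim\mathcal{MSL}_{p,q}(\bm{\Sigma}_{1},\bm{\Sigma}_{2})$ is equivalent to $vec(\bm{X})\sim\mathcal{SL}_{pq}(\bm{\Sigma}_{2}\otimes\bm{\Sigma}_{1})$, so the density of $vec(\bm{X})$ is obtained from \eqref{eq:1} by replacing $p$ with $pq$, the argument $\bm{y}$ with $vec(\bm{x})$, and the scale matrix $\bm{\Sigma}$ with $\bm{\Sigma}_{2}\otimes\bm{\Sigma}_{1}$. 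Since $\bm{x}\mapsto vec(\bm{x})$ is a linear bijection of $\mathbb{R}^{p\times q}$ onto $\mathbb{R}^{pq}$ that only permutes coordinates, its Jacobian has absolute value $1$, and hence $f_{\bm{X}}(\bm{x})=f_{vec(\bm{X})}(vec(\bm{x}))$.

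It then remains to rewrite the three ingredients of \eqref{eq:1} in terms of $\bm{\Sigma}_{1}$, $\bm{\Sigma}_{2}$ and $\bm{x}$. First, the Kronecker determinant identity $\begin{vmatrix}\bm{\Sigma}_{2}\otimes\bm{\Sigma}_{1}\end{vmatrix}=\begin{vmatrix}\bm{\Sigma}_{2}\end{vmatrix}^{p}\begin{vmatrix}\bm{\Sigma}_{1}\end{vmatrix}^{q}$ converts the normalizing constant of \eqref{eq:1} into the one in \eqref{eq:3}. Second, $(\bm{\Sigma}_{2}\otimes\bm{\Sigma}_{1})^{-1}=\bm{\Sigma}_{2}^{-1}\otimes\bm{\Sigma}_{1}^{-1}$. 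Third, and this is the only computation with any content, the quadratic form has to be turned into a trace: using $(\bm{B}\otimes\bm{A})\,vec(\bm{C})=vec(\bm{A}\bm{C}\bm{B}^{\top})$ and $vec(\bm{A})^{\top}vec(\bm{B})=\operatorname{tr}(\bm{A}^{\top}\bm{B})$, together with the symmetry of $\bm{\Sigma}_{2}^{-1}$ and the cyclic property of the trace, one obtains
\begin{equation*}
vec(\bm{x})^{\top}\left(\bm{\Sigma}_{2}^{-1}\otimes\bm{\Sigma}_{1}^{-1}\right)vec(\bm{x})=vec(\bm{x})^{\top}vec\left(\bm{\Sigma}_{1}^{-1}\bm{x}\,\bm{\Sigma}_{2}^{-1}\right)=\operatorname{tr}\left(\bm{\Sigma}_{2}^{-1}\bm{x}^{\top}\bm{\Sigma}_{1}^{-1}\bm{x}\right).
\end{equation*}

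Substituting these identities into \eqref{eq:1}, and noting that $\nu=\frac{2-pq}{2}$ is exactly the index obtained when $p$ is replaced by $pq$, yields \eqref{eq:3}. I do not expect any genuine obstacle: the result is a straightforward specialization of the multivariate case, and the only place demanding care is the vectorization bookkeeping in the quadratic-form-to-trace step, in particular getting the order of the factors $\bm{\Sigma}_{1}^{-1},\bm{x},\bm{\Sigma}_{2}^{-1}$ correct. As an alternative route one could imitate the proof of Theorem \ref{thm:1} directly in the matrix setting: represent $\bm{X}=\sqrt{W}\,\bm{Z}$ with $\bm{Z}\sim\mathcal{MN}_{p,q}(\bm{0},\bm{\Sigma}_{1},\bm{\Sigma}_{2})$ and $W\sim Exp(1)$ independent, form the joint density, and integrate out $w$ against the matrix normal density to recover the Bessel-$K_{\nu}$ integral; this works equally well but is longer than the vectorization argument above.
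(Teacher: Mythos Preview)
Your proposal is correct and follows essentially the same route as the paper: both start from Definition~\ref{def:1} to write down the $\mathcal{SL}_{pq}(\bm{\Sigma}_{2}\otimes\bm{\Sigma}_{1})$ density of $vec(\bm{X})$, and then apply the Kronecker determinant identity and the $vec$/trace identities to convert the quadratic form into $\operatorname{tr}(\bm{\Sigma}_{2}^{-1}\bm{x}^{\top}\bm{\Sigma}_{1}^{-1}\bm{x})$. The only minor addition in your write-up is the explicit remark that the $vec$ map has unit Jacobian, which the paper leaves implicit.
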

	
	\begin{proof} From the definition \ref{def:1} and the probability density function given in \eqref{eq:1},
		$vec(\bm{X}) \sim \mathcal{SL}_{pq}(\bm{\Sigma}_{2} \otimes \bm{\Sigma}_{1})$ with probability density function  
		\begin{multline*}
			f_{vec(\bm{X})}(vec(\bm{x})) = \frac{2}{(2\pi)^{\frac{pq}{2}} \begin{vmatrix} \bm{\Sigma}_{2} \otimes \bm{\Sigma}_{1} \end{vmatrix}^{\frac{1}{2}}} \left( \frac{\left(vec(\bm{x})\right)^\top \left(\bm{\Sigma}_{2} \otimes \bm{\Sigma}_{1} \right)^{-1} vec(\bm{x})}{2} \right) ^{\frac{2-pq}{4}} \\
			K_{\frac{2-pq}{2}} \left( \sqrt{2\left(vec(\bm{x})\right)^\top \left(\bm{\Sigma}_{2} \otimes \bm{\Sigma}_{1}\right)^{-1} vec(\bm{x})} \right).
		\end{multline*}
		
		Using properties of Kronecker product, trace and determinants (see, \cite{GN}, \cite{RC}),
		\[\begin{vmatrix} \bm{\Sigma}_{2} \otimes \bm{\Sigma}_{1} \end{vmatrix} = \begin{vmatrix} \bm{\Sigma}_{2}\end{vmatrix}^{p} \begin{vmatrix} \bm{\Sigma}_{1}\end{vmatrix}^{q},\]
		
		and  
		\begin{align*}
			\left(vec(\bm{x})\right)^\top \left(\bm{\Sigma}_{2} \otimes \bm{\Sigma}_{1} \right)^{-1} vec(\bm{x})&=\left(vec(\bm{x})\right)^\top \left((\bm{\Sigma}_{2}^{-1})^\top \otimes \bm{\Sigma}_{1}^{-1}\right) vec(\bm{x}) \\
			&=\left(vec(\bm{x})\right)^\top vec\left( \bm{\Sigma}_{1}^{-1}  \bm{x} \bm{\Sigma}_{2}^{-1}\right) \\
			&=\operatorname{tr}\left(\bm{x}^\top \bm{\Sigma}_{1}^{-1}  \bm{x} \bm{\Sigma}_{2}^{-1}\right)\\
			&=\operatorname{tr}\left(\bm{\Sigma}_{2}^{-1} \bm{x}^\top \bm{\Sigma}_{1}^{-1} \bm{x}\right),
		\end{align*}
		
		therefore,
		\begin{equation*}
			\left(vec(\bm{x})\right)^\top \left(\bm{\Sigma}_{2} \otimes \bm{\Sigma}_{1} \right)^{-1} vec(\bm{x})	= \operatorname{tr}\left(\bm{\Sigma}_{2}^{-1} \bm{x}^\top \bm{\Sigma}_{1}^{-1} \bm{x}\right).
		\end{equation*}
		
		Hence, 
		\begin{multline*}
			f(\bm{x})=\frac{2}{(2\pi)^\frac{pq}{2} \begin{vmatrix} \bm{\Sigma}_{2}\end{vmatrix}^{p/2} \begin{vmatrix} \bm{\Sigma}_{1}\end{vmatrix}^{q/2}} \left(\frac{\operatorname{tr}\left(\bm{\Sigma}_{2}^{-1} \bm{x}^\top \bm{\Sigma}_{1}^{-1} \bm{x}\right)} {2}\right)^{\frac{2-pq}{4}}\\ K_{\frac{2-pq}{2}} \left( \sqrt{ 2 \operatorname{tr}\left(\bm{\Sigma}_{2}^{-1} \bm{x}^\top \bm{\Sigma}_{1}^{-1} \bm{x}\right)}\right).
		\end{multline*}
		
	\end{proof}

	\begin{theorem}[\textbf{Representation}]
		\label{theorem:3} If $\bm{Z}\sim \mathcal{MN}_{p,q}(\bm{0}, \bm{\Sigma}_{1}, \bm{\Sigma}_{2})$, $W\sim Exp(1)$ and $\bm{Z}$ and $W$ are independent. Then, the random matrix $\bm{X}= \sqrt{W} \bm{Z}$ has a matrix variate symmetric Laplace distribution with probability density function given in \eqref{eq:3}.
	\end{theorem}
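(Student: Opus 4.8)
The plan is to reduce everything to the multivariate case via vectorization, exactly as Definition \ref{def:1} suggests. Set $\bm{X}=\sqrt{W}\,\bm{Z}$. Since $vec$ is a linear bijection between $\mathbb{R}^{p\times q}$ and $\mathbb{R}^{pq}$, scalar multiplication passes through it, so $vec(\bm{X})=\sqrt{W}\,vec(\bm{Z})$. By the standard relation between the matrix normal and multivariate normal laws (see \cite{GN}), $\bm{Z}\sim\mathcal{MN}_{p,q}(\bm{0},\bm{\Sigma}_{1},\bm{\Sigma}_{2})$ is equivalent to $vec(\bm{Z})\sim\mathcal{N}_{pq}(\bm{0},\bm{\Sigma}_{2}\otimes\bm{\Sigma}_{1})$, and $\bm{\Sigma}_{2}\otimes\bm{\Sigma}_{1}$ is positive definite because it is the Kronecker product of two positive definite matrices. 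Independence of $W$ and $\bm{Z}$ gives independence of $W$ and $vec(\bm{Z})$.

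Next I would apply Theorem \ref{thm:1} with dimension $pq$ in place of $p$ and scale matrix $\bm{\Sigma}_{2}\otimes\bm{\Sigma}_{1}$ in place of $\bm{\Sigma}$: since $vec(\bm{X})$ is the product of the square root of an $Exp(1)$ variable and an independent $\mathcal{N}_{pq}(\bm{0},\bm{\Sigma}_{2}\otimes\bm{\Sigma}_{1})$ vector, Theorem \ref{thm:1} yields $vec(\bm{X})\sim\mathcal{SL}_{pq}(\bm{\Sigma}_{2}\otimes\bm{\Sigma}_{1})$. By Definition \ref{def:1} this is precisely the statement $\bm{X}\sim\mathcal{MSL}_{p,q}(\bm{\Sigma}_{1},\bm{\Sigma}_{2})$, and the density of this distribution has already been computed in \eqref{eq:3}; this finishes the proof.

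There is essentially no hard step here: the argument is a bookkeeping chain through Definition \ref{def:1}, the matrix-normal/multivariate-normal correspondence, and Theorem \ref{thm:1}. The only points needing a word of care are (i) checking that $\bm{\Sigma}_{2}\otimes\bm{\Sigma}_{1}$ qualifies as an admissible scale matrix, i.e.\ is symmetric positive definite, so that Theorem \ref{thm:1} applies, and (ii) noting that $\sqrt{W}$, being a scalar, commutes with $vec$. If one preferred a self-contained computation instead, the alternative would be to mimic the proof of Theorem \ref{thm:1} directly: write the joint density of $(\bm{X},W)$ using the Jacobian $w^{-pq/2}$ of the map $\bm{Z}\mapsto\sqrt{w}\,\bm{Z}$, obtaining $f_{\bm{X},W}(\bm{x},w)\propto w^{-pq/2}\exp\!\big(-w-\tfrac{1}{2w}\operatorname{tr}(\bm{\Sigma}_{2}^{-1}\bm{x}^\top\bm{\Sigma}_{1}^{-1}\bm{x})\big)$, and then integrate out $w$ using the integral representation of $K_{\nu}$ with $\nu=\tfrac{2-pq}{2}$; the Bessel integral is the only non-mechanical ingredient, and it is the same one already used to obtain \eqref{eq:3}. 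I would present the short vectorization proof as the main one.
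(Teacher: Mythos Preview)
Your proposal is correct and follows essentially the same route as the paper: vectorize, use the matrix-normal/multivariate-normal correspondence to get $vec(\bm{Z})\sim\mathcal{N}_{pq}(\bm{0},\bm{\Sigma}_{2}\otimes\bm{\Sigma}_{1})$, apply Theorem~\ref{thm:1} to obtain $vec(\bm{X})\sim\mathcal{SL}_{pq}(\bm{\Sigma}_{2}\otimes\bm{\Sigma}_{1})$, and conclude via Definition~\ref{def:1}. Your extra remarks on positive definiteness of the Kronecker product and the alternative direct-integration route are fine additions but not needed for the main argument.
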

  \begin{proof}
		Since by definition of matrix normal distribution (\cite{Du}),
		\begin{equation*}
			\bm{Z}\sim \mathcal{MN}_{p,q}(\bm{0},\bm{\Sigma}_{1},\bm{\Sigma}_{2}) \iff vec(\bm{Z}) \sim \mathcal{N}_{pq}(vec(\bm{0}),\bm{\Sigma}_{2} \otimes \bm{\Sigma}_{1}),
		\end{equation*}
	 and for $ \bm{X}=\sqrt{W} \bm{Z}$, \[vec(\bm{X})=vec(\sqrt{W} \bm{Z})=\sqrt{W} vec(\bm{Z}), \]\ 
	then, from the representation \eqref{eq:2} of multivariate symmetric Laplace distribution,  
	\[ vec(\bm{X})=\sqrt{W}vec(\bm{Z}) \sim \mathcal{SL}_{pq}(\bm{\Sigma}_{2} \otimes \bm{\Sigma}_{1}).\]
      Hence, by the definition \ref{def:1},
	\[\sqrt{W}\bm{Z}\sim \mathcal{MSL}_{p,q}(\bm{\Sigma}_{1}, \bm{\Sigma}_{2}).\]	
 \end{proof}

 \begin{corollary}
 	\label{corly:1}
 		If $\bm{X} \sim \mathcal{MSL}_{p,q}(\bm{\Sigma}_{1},\bm{\Sigma}_{2})$ and $W \sim Exp(1)$, then
 	 the joint probability density function of random matrix $\bm{X}$ and random variable $W$ is
 	 \[f_{\bm{X}, W}(\bm{x}, w)=\frac{\exp(-w)} {(2\pi)^\frac{pq}{2}(w)^\frac{pq}{2} \begin{vmatrix} \bm{\Sigma}_{2}\end{vmatrix}^{p/2} \begin{vmatrix} \bm{\Sigma}_{1}\end{vmatrix}^{q/2}} \exp\left(-\frac{1}{2w} \operatorname{tr}\left( \bm{\Sigma}_{2}^{-1} \bm{x}^\top \bm{\Sigma}_{1}^{-1} \bm{x} \right) \right). \]
 \end{corollary}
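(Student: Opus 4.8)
The plan is to reduce the statement to the already-established multivariate joint density \eqref{eq:1.1} via vectorization. By Theorem \ref{theorem:3} we may write $\bm{X}=\sqrt{W}\,\bm{Z}$ with $\bm{Z}\sim\mathcal{MN}_{p,q}(\bm{0},\bm{\Sigma}_{1},\bm{\Sigma}_{2})$ and $W\sim Exp(1)$ independent, hence $vec(\bm{X})=\sqrt{W}\,vec(\bm{Z})$ with $vec(\bm{Z})\sim\mathcal{N}_{pq}(\bm{0},\bm{\Sigma}_{2}\otimes\bm{\Sigma}_{1})$ and \emph{the same} $W$. Thus the pair $(vec(\bm{X}),W)$ has exactly the structure of $(\bm{Y},W)$ in Theorem \ref{thm:1} with $p$ replaced by $pq$ and $\bm{\Sigma}$ replaced by $\bm{\Sigma}_{2}\otimes\bm{\Sigma}_{1}$, so \eqref{eq:1.1} gives directly
\[
f_{vec(\bm{X}),W}(vec(\bm{x}),w)=\frac{1}{(2\pi)^{\frac{pq}{2}}\begin{vmatrix}\bm{\Sigma}_{2}\otimes\bm{\Sigma}_{1}\end{vmatrix}^{\frac{1}{2}}\,w^{\frac{pq}{2}}}\exp\left(-w-\frac{1}{2w}\bigl(vec(\bm{x})\bigr)^{\top}\bigl(\bm{\Sigma}_{2}\otimes\bm{\Sigma}_{1}\bigr)^{-1}vec(\bm{x})\right).
\]

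Next I would substitute the two Kronecker-product identities already verified in the proof of the density formula \eqref{eq:3}, namely $\begin{vmatrix}\bm{\Sigma}_{2}\otimes\bm{\Sigma}_{1}\end{vmatrix}=\begin{vmatrix}\bm{\Sigma}_{2}\end{vmatrix}^{p}\begin{vmatrix}\bm{\Sigma}_{1}\end{vmatrix}^{q}$ and $\bigl(vec(\bm{x})\bigr)^{\top}(\bm{\Sigma}_{2}\otimes\bm{\Sigma}_{1})^{-1}vec(\bm{x})=\operatorname{tr}\left(\bm{\Sigma}_{2}^{-1}\bm{x}^{\top}\bm{\Sigma}_{1}^{-1}\bm{x}\right)$; these are genuine identities in $\bm{x},\bm{\Sigma}_{1},\bm{\Sigma}_{2}$, so the substitution is legitimate inside the full expression (not merely under the square root as in \eqref{eq:3}). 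Since $vec$ is a bijective linear reindexing of the $pq$ entries, its Jacobian is $1$ and $f_{\bm{X},W}(\bm{x},w)=f_{vec(\bm{X}),W}(vec(\bm{x}),w)$, which after pulling the factor $\exp(-w)$ to the front yields precisely the claimed formula.

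As a variant that avoids quoting \eqref{eq:1.1}, one can compute the joint density straight from the representation: the map $(\bm{Z},W)\mapsto(\bm{X},W)=(\sqrt{W}\,\bm{Z},W)$ has Jacobian $w^{-pq/2}$ (each of the $pq$ entries of $\bm{Z}$ is scaled by $\sqrt{w}$), so $f_{\bm{X},W}(\bm{x},w)=f_{\bm{Z}}(\bm{x}/\sqrt{w})\,f_{W}(w)\,w^{-pq/2}$, into which one feeds the matrix normal density of $\bm{Z}$ and $f_{W}(w)=\exp(-w)$. There is essentially no obstacle in either route; the only point deserving a sentence of care is the observation that the same exponential variable $W$ from Theorem \ref{theorem:3} governs both components, so that the \emph{joint} — and not merely the marginal — density is correctly read off, and that the trace identity applies verbatim with $\bm{x}$ replaced by $\bm{x}/\sqrt{w}$, producing the factor $1/(2w)$ in the exponent.
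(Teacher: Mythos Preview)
Your proposal is correct and follows essentially the same route as the paper's own proof: vectorize via Theorem~\ref{theorem:3}, invoke the multivariate joint density \eqref{eq:1.1} with $pq$ and $\bm{\Sigma}_{2}\otimes\bm{\Sigma}_{1}$ in place of $p$ and $\bm{\Sigma}$, and then apply the Kronecker determinant and trace identities to rewrite the result in matrix form. Your added remark that $vec$ has Jacobian $1$ and your alternative direct-Jacobian variant are both fine supplementary observations, but they do not constitute a different strategy.
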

\begin{proof}
	From the above theorem \ref{theorem:3}, the representation of a matrix variate symmetric Laplace distributed random variable $\bm{X} = \sqrt{W} \bm{Z}$, then 
	\[vec(\bm{X})=\sqrt{W} vec(\bm{Z})\sim \mathcal{SL}_{pq}(\bm{\Sigma}_{2} \otimes \bm{\Sigma}_{1}).\] 
	
	From the theorem \ref{thm:1}, the joint probability density function of $vec(\bm{X})$ and $W$ is 
	
	\begin{equation*}
		f(vec(\bm{x}),w)= \frac{1}{(2\pi)^{\frac{pq}{2}} \begin{vmatrix} \bm{\Sigma}_{2} \otimes \bm{\Sigma}_{1} \end{vmatrix}^{\frac{1}{2}}  w^\frac{pq}{2}}\exp\left(-w-\frac{\left(vec(\bm{x})\right)^\top  \left(\bm{\Sigma}_{2} \otimes \bm{\Sigma}_{1}\right)^{-1} vec(\bm{x})}{2w}\right) .
	\end{equation*}

	Using properties of Kronecker product, trace and determinants (see, \cite{GN}, \cite{RC}),
	\[\begin{vmatrix} \bm{\Sigma}_{2} \otimes \bm{\Sigma}_{1} \end{vmatrix} = \begin{vmatrix} \bm{\Sigma}_{2}\end{vmatrix}^{p} \begin{vmatrix} \bm{\Sigma}_{1}\end{vmatrix}^{q},\]
	
	\begin{equation*}
		\left(vec(\bm{x})\right)^\top \left(\bm{\Sigma}_{2} \otimes \bm{\Sigma}_{1} \right)^{-1} vec(\bm{x})	= \operatorname{tr}\left(\bm{\Sigma}_{2}^{-1} \bm{x}^\top \bm{\Sigma}_{1}^{-1} \bm{x}\right).
	\end{equation*}
	
	The joint probability density function of $\bm{X}$ and $W$ is
	\[f(\bm{x}, w)=\frac{\exp(-w)} {(2\pi)^\frac{pq}{2}(w)^\frac{pq}{2} \begin{vmatrix} \bm{\Sigma}_{2}\end{vmatrix}^{p/2} \begin{vmatrix} \bm{\Sigma}_{1}\end{vmatrix}^{q/2}} \exp\left(-\frac{1}{2w} \operatorname{tr}\left( \bm{\Sigma}_{2}^{-1} \bm{x}^\top \bm{\Sigma}_{1}^{-1} \bm{x} \right) \right). \]
\end{proof}
 
 \begin{lemma}
 	\label{lemma:2}
 	 If the random matrix $\bm{X}$ and random variable $W$ have a joint density function $f_{\bm{X},W}(\bm{x},w)$ given in the corollary \ref{corly:1}, then the conditional expectation of $\frac{1}{W}$ given $\bm{X}$ is
 	 \begin{equation*}
 	 		E\left(\frac{1}{W} \mid \bm{X}=\bm{x} \right)= \left( \frac{\operatorname{tr}\left(\bm{\Sigma}_{2}^{-1} \bm{x}^\top \bm{\Sigma}_{1}^{-1} \bm{x}\right)}{2}\right)^{-\frac{1}{2}} \frac{K_{\nu-1} \left( \sqrt{ 2 \operatorname{tr}\left(\bm{\Sigma}_{2}^{-1} \bm{x}^\top \bm{\Sigma}_{1}^{-1} \bm{x}\right)}\right)}{K_{\nu} \left( \sqrt{ 2 \operatorname{tr}\left(\bm{\Sigma}_{2}^{-1} \bm{x}^\top \bm{\Sigma}_{1}^{-1} \bm{x}\right)}\right)}.
 	 \end{equation*}
 	 
 \end{lemma}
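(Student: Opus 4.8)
The plan is to obtain this as a corollary of Lemma~\ref{lemma:1} via vectorization, exactly paralleling how Corollary~\ref{corly:1} was derived from Theorem~\ref{thm:1}. By Definition~\ref{def:1}, $vec(\bm{X}) \sim \mathcal{SL}_{pq}(\bm{\Sigma}_{2}\otimes\bm{\Sigma}_{1})$, and by the representation argument in Theorem~\ref{theorem:3} and Corollary~\ref{corly:1}, the pair $(vec(\bm{X}),W)$ has joint density of precisely the form \eqref{eq:1.1}, with $p$ replaced by $pq$, the vector $\bm{y}$ replaced by $vec(\bm{x})$, and the scale matrix $\bm{\Sigma}$ replaced by $\bm{\Sigma}_{2}\otimes\bm{\Sigma}_{1}$. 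Hence Lemma~\ref{lemma:1} applies verbatim to the random vector $vec(\bm{X})$ and gives
\[
E\!\left(\tfrac{1}{W}\,\big|\,\bm{X}=\bm{x}\right)=\left(\frac{\left(vec(\bm{x})\right)^{\top}\left(\bm{\Sigma}_{2}\otimes\bm{\Sigma}_{1}\right)^{-1}vec(\bm{x})}{2}\right)^{-\frac12}\frac{K_{\nu-1}\!\left(\sqrt{2\left(vec(\bm{x})\right)^{\top}\left(\bm{\Sigma}_{2}\otimes\bm{\Sigma}_{1}\right)^{-1}vec(\bm{x})}\right)}{K_{\nu}\!\left(\sqrt{2\left(vec(\bm{x})\right)^{\top}\left(\bm{\Sigma}_{2}\otimes\bm{\Sigma}_{1}\right)^{-1}vec(\bm{x})}\right)},
\]
with $\nu=\frac{2-pq}{2}$, since conditioning on $\bm{X}=\bm{x}$ is the same as conditioning on $vec(\bm{X})=vec(\bm{x})$.

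The second step is to rewrite the quadratic form using the Kronecker-product/trace identity
\[
\left(vec(\bm{x})\right)^{\top}\left(\bm{\Sigma}_{2}\otimes\bm{\Sigma}_{1}\right)^{-1}vec(\bm{x})=\operatorname{tr}\!\left(\bm{\Sigma}_{2}^{-1}\bm{x}^{\top}\bm{\Sigma}_{1}^{-1}\bm{x}\right),
\]
which was already established in the proof of the density formula \eqref{eq:3} and reused in Corollary~\ref{corly:1}. Substituting this into the displayed expression yields exactly the claimed formula, so no genuinely new computation is needed.

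An equally valid self-contained alternative would be to mimic the proof of Lemma~\ref{lemma:1} directly: form the conditional density $f_{W\mid\bm{X}}(w)=f_{\bm{X},W}(\bm{x},w)/f_{\bm{X}}(\bm{x})$ from Corollary~\ref{corly:1} and \eqref{eq:3}, obtaining a generalized-inverse-Gaussian-type density in $w$, then integrate $w^{-1}$ against it and recognize the result through the integral representation of $K_{-\nu}$ used in Theorem~\ref{thm:1}, with the exponent shifted by one (so that $K_{\nu}$ becomes $K_{\nu-1}$). I expect there to be essentially no obstacle here: the only points requiring care are tracking the Kronecker/trace identities consistently and matching the index shift $\nu\mapsto\nu-1$ in the Bessel integral, both of which are routine given the earlier results; one should also note in passing that the conditional density is well defined because the relevant Bessel functions are strictly positive.
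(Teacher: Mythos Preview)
Your proposal is correct. Your primary route---applying Lemma~\ref{lemma:1} to $vec(\bm{X})$ and then invoking the Kronecker/trace identity---differs from the paper's proof, which instead carries out the direct computation you describe as your ``self-contained alternative'': it forms $f_{W\mid\bm{X}}(w)=f_{\bm{X},W}(\bm{x},w)/f_{\bm{X}}(\bm{x})$ from Corollary~\ref{corly:1} and \eqref{eq:3} and integrates $w^{-1}$ against it, mirroring the proof of Lemma~\ref{lemma:1} line by line in the matrix setting. Your vectorization approach is more economical and is fully consistent with how the paper obtains its other matrix-variate results (the density in \eqref{eq:3}, Theorem~\ref{theorem:3}, Corollary~\ref{corly:1}) by reduction to the multivariate case; the paper's direct computation is slightly more self-contained but duplicates work already done in Lemma~\ref{lemma:1}. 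Either way the argument is routine, and your identification of the only delicate points (the trace identity and the index shift $\nu\mapsto\nu-1$) is accurate.
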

\begin{proof}
		The conditional distribution of $W$ given $\bm{X}$ is required to find the conditional expectation. The density function of the conditional distribution of $W$ given $\bm{X}$ is obtained as
	\begin{align*}
		f_{W|\bm{X}}(w)&=\frac{f_{\bm{X},W}(\bm{x},w)}{f_{\bm{X}}(\bm{x})}\\
		&= \frac{\exp\left(-w-\frac{1}{2w} \operatorname{tr}\left( \bm{\Sigma}_{2}^{-1} \bm{x}^\top \bm{\Sigma}_{1}^{-1} \bm{x} \right) \right)} {2 (w)^{\frac{pq}{2}} K_{\nu} \left( \sqrt{ 2 \operatorname{tr}\left(\bm{\Sigma}_{2}^{-1} \bm{x}^\top \bm{\Sigma}_{1}^{-1} \bm{x}\right)}\right)}  
		\left( \frac{\operatorname{tr}\left( \bm{\Sigma}_{2}^{-1} \bm{x}^\top \bm{\Sigma}_{1}^{-1} \bm{x}\right)}{2}\right)^{-\frac{\nu}{2}}.
	\end{align*}
		From this density function, the conditional expectation is
	\begin{align*}
		E\left(\frac{1}{W} \mid \bm{X}=\bm{x} \right) &= \int_{0}^{\infty} \frac{1}{w} \, f_{W|\bm{X}} \left(w \right) \, dw \\
		&=   \left( \frac{\operatorname{tr}\left(\bm{\Sigma}_{2}^{-1} \bm{x}^\top \bm{\Sigma}_{1}^{-1} \bm{x}\right)}{2}\right)^{-\frac{1}{2}} \frac{K_{\nu-1} \left( \sqrt{ 2 \operatorname{tr}\left(\bm{\Sigma}_{2}^{-1} \bm{x}^\top \bm{\Sigma}_{1}^{-1} \bm{x}\right)}\right)}{K_{\nu} \left( \sqrt{ 2 \operatorname{tr}\left(\bm{\Sigma}_{2}^{-1} \bm{x}^\top \bm{\Sigma}_{1}^{-1} \bm{x}\right)}\right)}.
	\end{align*}
\end{proof}

		\begin{note}
		If $\bm{X}\sim \mathcal{MSL}_{p,q}(\bm{\Sigma}_{1}, \bm{\Sigma}_{2})$, then the expected value or mean of the random matrix $\bm{X}$ is $\bm{0}$.
	\end{note}

	\begin{theorem}[\textbf{Characteristic function}]
		\label{thm:4}
		If $\bm{X} \sim \mathcal{MSL}_{p,q}(\bm{\Sigma}_{1},\bm{\Sigma}_{2})$, then the characteristic function of $\bm{X}$ is
		\begin{equation}
			\label{eq:6}
			\phi_{\bm{X}}(\bm{T})=\frac{1}{1+\frac{1}{2} \operatorname{tr}\left(\bm{\Sigma}_{2} \bm{T}^\top \bm{\Sigma}_{1} \bm{T}\right)} .
		\end{equation}
	\end{theorem}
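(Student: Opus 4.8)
The plan is to exploit the scale–mixture representation established in Theorem~\ref{theorem:3}: write $\bm{X}=\sqrt{W}\,\bm{Z}$ with $\bm{Z}\sim\mathcal{MN}_{p,q}(\bm{0},\bm{\Sigma}_{1},\bm{\Sigma}_{2})$ independent of $W\sim Exp(1)$, condition on $W$, invoke the known characteristic function of the matrix normal distribution, and then integrate out $W$. Recall that for a $p\times q$ random matrix the characteristic function is $\phi_{\bm{X}}(\bm{T})=E\!\left[\exp\!\left(i\operatorname{tr}(\bm{T}^\top\bm{X})\right)\right]$ for $\bm{T}\in\mathbb{R}^{p\times q}$, and that $\operatorname{tr}(\bm{T}^\top\bm{X})=(vec(\bm{T}))^\top vec(\bm{X})$.

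First I would condition on $W=w$. Since $vec(\bm{X})=\sqrt{w}\,vec(\bm{Z})$ and $vec(\bm{Z})\sim\mathcal{N}_{pq}(\bm{0},\bm{\Sigma}_{2}\otimes\bm{\Sigma}_{1})$, we get $vec(\bm{X})\mid W=w\sim\mathcal{N}_{pq}(\bm{0},\,w(\bm{\Sigma}_{2}\otimes\bm{\Sigma}_{1}))$, so the conditional characteristic function is $\exp\!\left(-\tfrac{w}{2}(vec(\bm{T}))^\top(\bm{\Sigma}_{2}\otimes\bm{\Sigma}_{1})\,vec(\bm{T})\right)$. Next I would rewrite the quadratic form using exactly the Kronecker/trace identity already used in the proof of the density formula \eqref{eq:3} (the same computation, without inverses): $(vec(\bm{T}))^\top(\bm{\Sigma}_{2}\otimes\bm{\Sigma}_{1})\,vec(\bm{T})=\operatorname{tr}\!\left(\bm{\Sigma}_{2}\bm{T}^\top\bm{\Sigma}_{1}\bm{T}\right)=:a\ge 0$, where nonnegativity follows from positive definiteness of $\bm{\Sigma}_{1},\bm{\Sigma}_{2}$.

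Then, by the tower property (the interchange of $E_W$ and the conditional expectation is justified since the integrand is bounded in modulus by $1$), $\phi_{\bm{X}}(\bm{T})=E_W\!\left[\exp(-\tfrac{1}{2}aW)\right]=\int_{0}^{\infty}e^{-w}\exp(-\tfrac{a}{2}w)\,dw=\dfrac{1}{1+\tfrac{a}{2}}=\dfrac{1}{1+\tfrac12\operatorname{tr}(\bm{\Sigma}_{2}\bm{T}^\top\bm{\Sigma}_{1}\bm{T})}$, which is \eqref{eq:6}. Alternatively, one could apply Definition~\ref{def:1} together with the characteristic function $(1+\tfrac12\,\bm{t}^\top\bm{\Sigma}\bm{t})^{-1}$ of $\mathcal{SL}_{pq}(\bm{\Sigma})$ evaluated at $\bm{t}=vec(\bm{T})$ and $\bm{\Sigma}=\bm{\Sigma}_{2}\otimes\bm{\Sigma}_{1}$; this is really the same argument specialized, since that multivariate formula is itself obtained by the identical conditioning on $W$.

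There is no serious obstacle here: the only points demanding care are recalling the matrix normal characteristic function and correctly distributing the scalar $w$ across the Kronecker factors (it does not matter how $w$ is split between $\bm{\Sigma}_{1}$ and $\bm{\Sigma}_{2}$, since only the product $w(\bm{\Sigma}_{2}\otimes\bm{\Sigma}_{1})$ enters), together with the routine Laplace-transform evaluation of the $Exp(1)$ integral and the (trivial) justification of the interchange of expectations.
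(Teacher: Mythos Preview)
Your proposal is correct. The paper does not actually supply a proof of this theorem: it merely states the characteristic function (attributing the result to \cite{Y}) and moves on to the subsequent Note about the scaling indeterminacy of $(\bm{\Sigma}_{1},\bm{\Sigma}_{2})$. Your argument---conditioning on $W$ in the representation $\bm{X}=\sqrt{W}\bm{Z}$, invoking the Gaussian characteristic function, converting the vectorized quadratic form to $\operatorname{tr}(\bm{\Sigma}_{2}\bm{T}^\top\bm{\Sigma}_{1}\bm{T})$ via the same Kronecker identity used for the density, and then recognizing the $Exp(1)$ Laplace transform---is the standard and fully rigorous route, and your alternative (applying Definition~\ref{def:1} together with the known $\mathcal{SL}_{pq}$ characteristic function) is equally valid.
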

	
	\begin{note} If $\bm{\Sigma}_{1}$ and $\bm{\Sigma}_{2}$ are replaced by $a \bm{\Sigma}_{1}$ and   $(1/a) \bm{\Sigma}_{2}$ with $a>0$, respectively, in \eqref{eq:6}, then it does not affect the characteristic function $\phi_{\bm{X}}(\bm{T})$. Therefore, the parameters are defined up to a positive multiplicative constant.
	\end{note}
	
	\section{Maximum likelihood estimation}
		In this section, the MLE of the parameters of multivariate and matrix variate symmetric Laplace distributions are obtained. To obtain these estimators, an iterative algorithm based on the EM algorithm is proposed; since an explicit solution of the score equations is not possible, as the probability density functions of these distributions include the modified Bessel function of the third kind. First, the concept of the EM algorithm in the present context is explained.
	
	\subsection{\textbf{EM algorithm}}
	The EM algorithm is a technique of finding maximum likelihood estimates, in case of missing data (see \cite{DLR}; \cite{MK}). It is an iterative procedure for computing the MLE when the observations can be viewed as incomplete data or the data has unobservable latent variables. In both cases of multivariate and matrix variate symmetric Laplace distribution, $W\sim Exp(1)$ is used as latent variables in the representations $\bm{Y}=\sqrt{W}\bm{Z}$ and $\bm{X}= \sqrt{W} \bm{Z}$. Each iteration of the EM algorithm has two steps, the Expectation step or E-step and the Maximization step or M-step.

	\subsection{\textbf{Maximum likelihood estimation of \texorpdfstring{$\bm{\Sigma}$}{PDFstring} in \texorpdfstring{$\mathcal{SL}_{p}(\bm{\Sigma})$}{PDFstring}}}
	Let $\bm{Y}_1,\bm{Y}_2,\ldots ,\bm{Y}_N$ be random sample from a multivariate symmetric Laplace distribution $\mathcal{SL}_{p}(\bm{\Sigma})$.
	Then, the log-likelihood function (up to an additive constant) is 
	\begin{equation*}
		\ell(\bm{\Sigma})=- \frac{N}{2} \log \begin{vmatrix}\bm{\Sigma} \end{vmatrix} + \frac{\nu}{2} \sum_{i=1}^{N} \log ({\bm{Y}_{i}}^\top \bm{\Sigma}^{-1} \bm{Y}_{i})+ \sum_{i=1}^{N}\log K_{\nu} \left( \sqrt{2 ({\bm{Y}_{i}}^\top \bm{\Sigma}^{-1} \bm{Y}_{i})}  \right).
	\end{equation*}
	The parameter $\bm{\Sigma}$ in the argument of $K_{\nu}$, the modified Bessel function of the third kind, makes maximising this log-likelihood function difficult, as the score equation does not have an explicit solution. Hence, the EM algorithm is used on the joint probability density function of $\bm{Y}$ and $W$ to obtain the MLE of the parameter $\bm{\Sigma}$ using the representation given in the theorem \ref{thm:1}.
	
	The joint probability density function of $\bm{Y}$ and $W$ is
	\[f_{\bm{Y},W}(\bm{y},w)= \frac{\exp(-w)}{(2\pi)^{\frac{p}{2}} \begin{vmatrix} \bm{\Sigma} \end{vmatrix}^{\frac{1}{2}}  w^\frac{p}{2}} \exp\left(-\frac{1}{2w} \bm{y}^\top  \bm{\Sigma}^{-1} \bm{y}\right).\]

   $\left(\bm{Y}_{1},\bm{Y}_{2},\cdots,\bm{Y}_{N},W_{1},W_{2},\cdots, W_{N}\right)$ called the complete data, here, $\bm{Y}_{1},\bm{Y}_{2},\cdots,\bm{Y}_{N}$ are observable data and $W_{1}, W_{2},\ldots W_{N}$ are missing data (latent variables). Thus, using the EM algorithm the MLE of $\bm{\Sigma}$ are obtained as follows:
   
   Using the joint probability density function of $\bm{Y}$ and $W$, the complete data log-likelihood function (up to an additive constant) is 
	
	\begin{equation*}
		\ell_{c}(\bm{\Sigma})= -\frac{N}{2}\log \begin{vmatrix}\bm{\Sigma} \end{vmatrix} -\frac{1}{2}\sum_{i=1}^{N}\frac{1}{W_{i}} \left({\bm{Y}_{i}}^\top \bm{\Sigma}^{-1} \bm{Y}_{i}\right) -\sum_{i=1}^{N}\left(\frac{p}{2} \log W_{i} +W_{i}\right).
	\end{equation*}
	
	Since the last term of this equation does not contain any unknown parameter, it can be ignored for maximization of $\ell_{c}(\bm{\Sigma})$ with respect to $\bm{\Sigma}$.
	Therefore, the function considered for maximization is
	\begin{equation*}
		\ell_{c}(\bm{\Sigma})=-\frac{N}{2}\log \begin{vmatrix}\bm{\Sigma} \end{vmatrix} -\frac{1}{2}\sum_{i=1}^{N}\frac{1}{W_{i}} \left({\bm{Y}_{i}}^\top \bm{\Sigma}^{-1} \bm{Y}_{i}\right).
	\end{equation*}
	
	$W$ is a latent variable, which is not observable, it is replaced with its conditional expectation given $\bm{Y}_1,\bm{Y}_2,\cdots,\bm{Y}_N$ and the current estimate of $\bm{\Sigma}$, (say $\hat{\bm{\Sigma}}$). After taking the conditional expectation, the function to be maximized is 
	\begin{equation}
		\label{eq:10}
		Q(\bm{\Sigma} \mid \bm{Y}_{i},\hat{\bm{\Sigma}} )=-\frac{N}{2}\log \begin{vmatrix}\bm{\Sigma} \end{vmatrix} -\frac{1}{2}\sum_{i=1}^{N} E\left(\frac{1}{W_{i}} | \bm{Y}_{i},\hat{\bm{\Sigma}}\right) \left({\bm{Y}_{i}}^\top \bm{\Sigma}^{-1} \bm{Y}_{i}\right),
	\end{equation}
	where $E(\frac{1}{W_{i}} | \bm{Y}_{i}, \hat{\Sigma})$ is the conditional expectation of $\frac{1}{W_{i}}$ given $\bm{Y}_{i}$ and the current estimate of $\bm{\Sigma}$, that is, $\hat{\bm{\Sigma}}$.

	Thus, from the lemma \ref{lemma:1}, the conditional expectations of $\frac{1}{W_{i}}$ given $\bm{Y}_{i}$ and the current estimate $\hat{\bm{\Sigma}}$, is 
	\begin{equation}
		\label{eq:11}
		v_{i}= E\left(\frac{1}{W_{i}} | \bm{Y}_{i},\hat{\bm{\Sigma}} \right)=  \left( \frac{\bm{Y}_{i}^\top \hat{\bm{\Sigma}}^{-1} \bm{Y}_{i}}{2} \right) ^{-\frac{1}{2}} \frac{K_{\nu-1} \left( \sqrt{2 \left({\bm{Y}_{i}}^\top \hat{\bm{\Sigma}}^{-1} \bm{Y}_{i}\right)}  \right) }{K_{\nu} \left( \sqrt{2 \left({\bm{Y}_{i}}^\top \hat{\bm{\Sigma}}^{-1} \bm{Y}_{i}\right)}  \right) } ,
	\end{equation}
	for $i=1,2,\cdots,N.$
	
	By substituting the conditional expectations in \eqref{eq:10} with $v_{i}$'s as derived in \eqref{eq:11}, the function to be maximized becomes
	\begin{equation}
		\label{eq:12}
		Q(\bm{\Sigma}|\bm{Y}_{i},\hat{\bm{\Sigma}})= -\frac{N}{2}\log \begin{vmatrix}\bm{\Sigma} \end{vmatrix} -\frac{1}{2}\sum_{i=1}^{N} v_{i} {\bm{Y}_{i}}^\top \bm{\Sigma}^{-1} \bm{Y}_{i}.
	\end{equation}
	
	To find the maxima of $\bm{\Sigma}$, differentiating \eqref{eq:12} with respect to $\bm{\Sigma}$ and setting it equal to zero 
	\[ -\frac{N}{2} \bm{\Sigma} ^{-1}+\frac{1}{2} \sum_{i=1}^{N} v_{i} (\bm{\Sigma}^{-1}\bm{Y}_{i}\bm{Y}_{i}^\top \bm{\Sigma}^{-1}) =0.\] 
	The maximum likelihood estimator, the solution of the above score equation, is obtained as
	
	\[ \hat{\bm{\Sigma}} =\frac{1}{N} \sum_{i=1}^{N} v_{i} \bm{Y}_{i}\bm{Y}_{i}^\top . \] 
	
	\textbf{The algorithm for the MLE of $\bm{\Sigma}$ in $\mathcal{SL}_{p}(\bm{\Sigma})$}
	\begin{enumerate}
		\item Set iteration number $k=0$ and select the initial estimate of the parameter $\bm{\Sigma}$, let $\hat{\bm{\Sigma}}_{(0)}$.
		\item Using the current estimates $\hat{\bm{\Sigma}}_{(k-1)}$, for $k=1,2,\cdots$, calculate the conditional expectations
		\[	v_{i}^{(k)}=  \left( \frac{\bm{Y}_{i}^\top \left(\hat{\bm{\Sigma}}_{(k-1)}\right)^{-1} \bm{Y}_{i}}{2} \right)^{-1/2} \frac{K_{\nu-1} \left( \sqrt{2 \left({\bm{Y}_{i}}^\top \left(\hat{\bm{\Sigma}}_{(k-1)}\right)^{-1} \bm{Y}_{i}\right)}  \right) }{K_{\nu} \left( \sqrt{2 \left({\bm{Y}_{i}}^\top \left(\hat{\bm{\Sigma}}_{(k-1)}\right)^{-1} \bm{Y}_{i}\right)}  \right) } , \]
		for $i=1,2,\cdots,N.$
		\item Use the following updated equation to calculate the new estimate
		\[  \hat{\bm{\Sigma}}_{(k)} =\frac{1}{N} \sum_{i=1}^{N} v_{i}^{(k)}\bm{Y}_{i} \bm{Y}_{i}^\top .  \]
		\item Repeat these steps until
		\[\ell \left(\hat{\bm{\Sigma}}_{(k)}\right)-\ell \left(\hat{\bm{\Sigma}}_{(k-1)}\right)  <\epsilon , \  k=1,2,\cdots, \]
		where $\epsilon>0$ is an arbitrary small number and $\ell(\bm{\Sigma})$ is  
		\begin{multline*}
			\ell(\bm{\Sigma})=- \frac{N}{2} \log \begin{vmatrix} \bm{\Sigma} \end{vmatrix} + \frac{\nu}{2} \sum_{i=1}^{N} \log \left(\operatorname{tr}(\bm{\Sigma}^{-1} \bm{Y}_{i} {\bm{Y}_{i}}^\top)\right) + \sum_{i=1}^{N}\log K_{\nu} \left( \sqrt{2 \operatorname{tr}(\bm{\Sigma}^{-1} \bm{Y}_{i}{\bm{Y}_{i}}^\top)}  \right).
		\end{multline*}
	\end{enumerate}
	
	\subsection{\textbf{Maximum likelihood estimation of \texorpdfstring{$\bm{\Sigma}_{1}, \bm{\Sigma}_{2}$}{PDFstring} in \texorpdfstring{$\mathcal{MSL}_{p,q}(\bm{\Sigma}_{1}, \bm{\Sigma}_{2})$}{PDFstring}}}
	
	Let $\bm{X}_1, \bm{X}_2,\ldots , \bm{X}_N$ be random sample from a matrix variate symmetric Laplace distribution $\mathcal{MSL}_{p,q}(\bm{\Sigma}_{1}, \bm{\Sigma}_{2})$. The likelihood function is 
	\begin{equation*}
		L\left( \bm{\Sigma}_{1}, \bm{\Sigma}_{2} \mid \bm{X}_{1},\cdots,\bm{X}_{N} \right) = \prod_{i=1}^{N} f(\bm{X}_{i}), 
	\end{equation*}
	where $f(\bm{X}_{i})$ is as in \eqref{eq:3}.
	Then, the log-likelihood function (up to an additive constant) is 
	\begin{multline*}
		\ell(\bm{\Sigma}_{1}, \bm{\Sigma}_{2}) = -\frac{qN}{2} \log {\begin{vmatrix} \bm{\Sigma}_{1}\end{vmatrix}}-\frac{pN}{2} \log {\begin{vmatrix} \bm{\Sigma}_{2}\end{vmatrix}} +\frac{\nu}{2} \sum_{i=1}^{N} \log \left(\operatorname{tr}\left(\bm{\Sigma}_{2}^{-1}{\bm{X}_{i}}^\top \bm{\Sigma}_{1}^{-1} \bm{X}_{i}\right) \right) \\
		+ \sum_{i=1}^{N}\log K_{\nu} \left( \sqrt{2 \operatorname{tr} \left(\bm{\Sigma}_{2}^{-1} {\bm{X}_{i}}^\top \bm{\Sigma}_{1}^{-1} \bm{X}_{i}\right)}  \right).
	\end{multline*}
	
	Parameters $\bm{\Sigma}_{1}$ and $\bm{\Sigma}_{2}$ in the argument of $K_{\nu}$, the modified Bessel function of the third kind, makes maximising the log-likelihood function difficult, as the score equations do not have explicit solutions. So, the EM algorithm is used on the joint probability density function of $\bm{X}$ and $W$ to obtain the maximum likelihood estimators using the representation given in the theorem \refeq{theorem:3}.
	
	The joint probability density function of $\bm{X}$ and $W$ is
	\[f(\bm{x}, w)=\frac{\exp(-w)} {(2\pi)^\frac{pq}{2}(w)^\frac{pq}{2} \begin{vmatrix} \bm{\Sigma}_{2}\end{vmatrix}^{p/2} \begin{vmatrix} \bm{\Sigma}_{1}\end{vmatrix}^{q/2}} \exp\left(-\frac{1}{2w} \operatorname{tr}\left( \bm{\Sigma}_{2}^{-1} \bm{x}^\top \bm{\Sigma}_{1}^{-1} \bm{x} \right) \right). \]
	
	Suppose $\left(\bm{X}_{1},\bm{X}_{2},\cdots,\bm{X}_{N}, W_{1},W_{2},\cdots,W_{N}\right)$ is the complete data, where $\bm{X}_{1},\cdots,\bm{X}_{N}$ are observable data and $W_{1}, W_{2},\ldots W_{N}$ are missing data (latent variables).
	
	Thus, using the joint probability density function of $\bm{X}$ and $W$, the complete data log-likelihood function (up to an additive constant) is
	
	\begin{multline*}
		\ell_{c}(\bm{\Sigma}_{1}, \bm{\Sigma}_{2})	=-\frac{qN}{2} \log \begin{vmatrix} \bm{\Sigma}_{1}\end{vmatrix}-\frac{pN}{2} \log \begin{vmatrix} \bm{\Sigma}_{2} \end{vmatrix}-\frac{1}{2}\sum_{i=1}^{N}\frac{1}{W_{i}} \operatorname{tr}\left( \bm{\Sigma}_{2}^{-1}{\bm{X}_{i}}^\top \bm{\Sigma}_{1}^{-1} \bm{X}_{i} \right)\\
		-\left(\sum_{i=1}^{N} \left( W_{i} + \frac{pq}{2} \log(W_{i})\right) \right). 
	\end{multline*}

	Since the last term does not contain any unknown parameter, it can be ignored for maximization of $\ell_{c}(\bm{\Sigma}_{1}, \bm{\Sigma}_{2})$ with respect to $\bm{\Sigma}_{1}$ and $\bm{\Sigma}_{2}$.
	Therefore, the function considered for maximization is
	
	\begin{equation*}
		\ell_{c}(\bm{\Sigma}_{1}, \bm{\Sigma}_{2})= -\frac{qN}{2} \log \begin{vmatrix} \bm{\Sigma}_{1}\end{vmatrix}-\frac{pN}{2} \log \begin{vmatrix} \bm{\Sigma}_{2}\end{vmatrix}-\frac{1}{2}\sum_{i=1}^{N}\frac{1}{W_{i}} \operatorname{tr}\left( \bm{\Sigma}_{2}^{-1}{\bm{X}_{i}}^\top \bm{\Sigma}_{1}^{-1} \bm{X}_{i} \right).
	\end{equation*}
	
	$W$ is a latent variable, which is not observable, it is replaced with its conditional expectation given $\bm{X}_{1},\bm{X}_{2}, \cdots, \bm{X}_{N}$ and the current estimates of $\bm{\Sigma}_{1}$ and $\bm{\Sigma}_{2}$, (say $\hat{\bm{\Sigma}}_{1}$ and $\hat{\bm{\Sigma}}_{2}$). Thus, after taking the conditional expectation, the function to be maximized is
	
	\begin{multline}
		\label{eq:16}
		Q(\bm{\Sigma}_{1}, \bm{\Sigma}_{2})= -\frac{qN}{2} \log \begin{vmatrix} \bm{\Sigma}_{1}\end{vmatrix}-\frac{pN}{2} \log \begin{vmatrix} \bm{\Sigma}_{2}\end{vmatrix} \\ -\frac{1}{2}\sum_{i=1}^{N} E\left(\frac{1}{W_{i}} | \bm{X}_{i}, \hat{\bm{\Sigma}}_{1}, \hat{\bm{\Sigma}}_{2}\right) \operatorname{tr}\left( \bm{\Sigma}_{2}^{-1}{\bm{X}_{i}}^\top \bm{\Sigma}_{1}^{-1} \bm{X}_{i} \right),
	\end{multline}
	
	where $E(\frac{1}{W_{i}} | \bm{X}_{i}, \hat{\bm{\Sigma}}_{1}, \hat{\bm{\Sigma}}_{2})$ is the conditional expectation of $\frac{1}{W_{i}}$ given $\bm{X}_{i}$ and the current estimates $\hat{\bm{\Sigma}}_{1}$ and $\hat{\bm{\Sigma}}_{2}$ of $\bm{\Sigma}_{1}$ and $\bm{\Sigma}_{2}$.
	
     Thus, from the lemma \ref{lemma:2}, the conditional expectation of $\frac{1}{W_{i}}$ given $\bm{X}_{i}$ and $\hat{\bm{\Sigma}}_{1}$, $\hat{\bm{\Sigma}}_{2}$, is
	\begin{multline}
		\label{eq:17}
		v_{i}=E\left(\frac{1}{W_{i}} | \bm{X}_{i}, \hat{\bm{\Sigma}}_{1}, \hat{\bm{\Sigma}}_{2}\right) =  \left( \frac{\operatorname{tr}\left( \hat{\bm{\Sigma}}_{2}^{-1} \bm{X}_{i}^\top \hat{\bm{\Sigma}}_{1}^{-1} \bm{X}_{i}\right)}{2}\right)^{-\frac{1}{2}} \\ \frac{K_{\nu-1} \left( \sqrt{ 2 \operatorname{tr}\left(\hat{\bm{\Sigma}}_{2}^{-1} \bm{X}_{i}^\top \hat{\bm{\Sigma}}_{1}^{-1} \bm{X}_{i}\right)}\right)}{K_{\nu} \left( \sqrt{ 2 \operatorname{tr}\left(\hat{\bm{\Sigma}}_{2}^{-1} \bm{X}_{i}^\top \hat{\bm{\Sigma}}_{1}^{-1} \bm{X}_{i}\right)}\right)},
	\end{multline}
	for $i=1,2,\cdots,N.$
	
	By substituting the conditional expectations in \eqref{eq:16} with $v_{i}$'s as derived in \eqref{eq:17}, the function to be maximized becomes
	\begin{multline}
		\label{eq:18}
		Q(\bm{\Sigma}_{1}, \bm{\Sigma}_{2}|\bm{X}_{i}, \hat{\bm{\Sigma}}_{1}, \hat{\bm{\Sigma}}_{2})= -\frac{qN}{2} \log \begin{vmatrix} \bm{\Sigma}_{1}\end{vmatrix}-\frac{pN}{2} \log \begin{vmatrix} \bm{\Sigma}_{2}\end{vmatrix}\\ -\frac{1}{2}\sum_{i=1}^{N} v_{i} \operatorname{tr}\left( \bm{\Sigma}_{2}^{-1}{\bm{X}_{i}}^\top \bm{\Sigma}_{1}^{-1} \bm{X}_{i} \right).
	\end{multline}
	To find the maxima of $\bm{\Sigma_{1}}$ and $\bm{\Sigma_{2}}$, differentiating \eqref{eq:18} with respect to $\bm{\Sigma}_{1}$ and $\bm{\Sigma}_{2}$ (for the matrix derivatives, see \cite{Dw}, \cite{Gr}) and setting them equal to zero, the score equations obtained are
	\begin{multline*}
		\diffp{Q}{{\bm{\Sigma}_{1}}}=-qN \bm{\Sigma}_{1}^{-1}+\frac{qN}{2} \operatorname{diag} \left( \bm{\Sigma}_{1}^{-1} \right) +\bm{\Sigma}_{1}^{-1}\left(\sum_{i=1}^{N} v_{i} \bm{X}_{i} \bm{\Sigma}_{2}^{-1} {\bm{X}_{i}}^\top \right) \bm{\Sigma}_{1}^{-1} \\
		-\frac{1}{2}\operatorname{diag}\left( \bm{\Sigma}_{1}^{-1}\left(\sum_{i=1}^{N} v_{i} \bm{X}_{i}\bm{\Sigma}_{2}^{-1} {\bm{X}_{i}}^\top \right) \bm{\Sigma}_{1}^{-1}\right)=0,
	\end{multline*}
	\begin{multline*}
		\diffp{Q} {{\bm{\Sigma}_{2}}} =	-pN \bm{\Sigma}_{2}^{-1}+\frac{pN}{2} \operatorname{diag}(\bm{\Sigma}_{2}^{-1})+\bm{\Sigma}_{2}^{-1}\left(\sum_{i=1}^{N} v_{i} \bm{X}_{i}^\top \bm{\Sigma}_{1}^{-1} {\bm{X}_{i}}\right) \bm{\Sigma}_{2}^{-1} \\
		-\frac{1}{2}\operatorname{diag}\left( \bm{\Sigma}_{2}^{-1}\left(\sum_{i=1}^{N} v_{i} \bm{X}_{i}^\top \bm{\Sigma}_{1}^{-1} {\bm{X}_{i}} \right) \bm{\Sigma}_{2}^{-1}\right)=0.
	\end{multline*}

	The maximum likelihood estimators, solutions of the above score equations, are obtained as
	
	\begin{equation}
		\label{eq:19}
		\hat{\bm{\Sigma}}_{1}=\frac{1}{qN}\sum_{i=1}^{N} v_{i} \bm{X}_{i} \bm{\Sigma}_{2}^{-1} {\bm{X}_{i}}^\top,
	\end{equation}  
	\begin{equation}
		\label{eq:20}
		\hat{\bm{\Sigma}}_{2}=\frac{1}{pN}\sum_{i=1}^{N} v_{i} \bm{X}_{i}^\top \bm{\Sigma}_{1}^{-1} {\bm{X}_{i}}.
	\end{equation}
	
	\textbf{The algorithm for the MLE of $\bm{\Sigma}_{1}$ and $\bm{\Sigma}_{2}$ in $\mathcal{MSL}_{p,q}(\bm{\Sigma}_{1}, \bm{\Sigma}_{2})$}
	\begin{enumerate}
		\item Set iteration number $k=0$ and select the initial estimate of the parameters $\bm{\Sigma}_{1}$ and $\bm{\Sigma}_{2}$, let $\hat{\bm{\Sigma}}_{1}^{(0)}$ and $\hat{\bm{\Sigma}}_{2}^{(0)}$, respectively.
		\item Using the current estimates $\hat{\bm{\Sigma}}_{1}^{(k-1)}$ and $\hat{\bm{\Sigma}}_{2}^{(k-1)}$, for $k=1,2,\cdots$, calculate the conditional expectations
		
		\begin{multline*}
			v_{i}^{(k)}=  \left( \frac{\operatorname{tr}\left( \left(\hat{\bm{\Sigma}}_{2}^{(k-1)}\right)^{-1} \bm{X}_{i}^\top \left(\hat{\bm{\Sigma}}_{1}^{(k-1)}\right)^{-1} \bm{X}_{i}\right)}{2}\right)^{-\frac{1}{2}} \\ 
			\frac{K_{\nu-1} \left( \sqrt{ 2 \operatorname{tr}\left(\left(\hat{\bm{\Sigma}}_{2}^{(k-1)}\right)^{-1} \bm{X}_{i}^\top \left(\hat{\bm{\Sigma}}_{1}^{(k-1)}\right)^{-1} \bm{X}_{i}\right)}\right)}{K_{\nu} \left( \sqrt{ 2 \operatorname{tr}\left(\left(\hat{\bm{\Sigma}}_{2}^{(k-1)}\right)^{-1} \bm{X}_{i}^\top \left(\hat{\bm{\Sigma}}_{1}^{(k-1)}\right)^{-1} \bm{X}_{i}\right)}\right)},
		\end{multline*}
		
		for $i=1,2,\cdots,N.$
		\item Use the following updated equations to calculate the new estimate
		\[ \hat{\bm{\Sigma}}_{1}^{(k)} =\frac{1}{qN}\sum_{i=1}^{N} v_{i}^{(k)} \bm{X}_{i} \left(\hat{\bm{\Sigma}}_{2}^{(k-1)}\right)^{-1} {\bm{X}_{i}}^\top, \]
		\[\hat{\bm{\Sigma}}_{2}^{(k)}=\frac{1}{pN}\sum_{i=1}^{N} v_{i}^{(k)} \bm{X}_{i}^\top \left(\hat{\bm{\Sigma}}_{1}^{(k)}\right)^{-1} {\bm{X}_{i}}. \]
		\item Repeat these steps until
		\[\ell \left(\hat{\bm{\Sigma}}_{1}^{(k)}, \hat{\bm{\Sigma}}_{2}^{(k)}\right)-\ell \left(\hat{\bm{\Sigma}}_{1}^{(k-1)},\hat{\bm{\Sigma}}_{2}^{(k-1)}\right)  <\epsilon , \  k=1,2,\cdots, \]
		where $\epsilon>0$ is an arbitrary small number and $\ell(\bm{\Sigma}_{1}, \bm{\Sigma}_{2})$ is
		\begin{multline*}
			\ell(\bm{\Sigma}_{1}, \bm{\Sigma}_{2}) = -\frac{qN}{2} \log {\begin{vmatrix} \bm{\Sigma}_{1}\end{vmatrix}}-\frac{pN}{2} \log {\begin{vmatrix} \bm{\Sigma}_{2}\end{vmatrix}} \\
			+\frac{\nu}{2} \sum_{i=1}^{N} \log \left(\operatorname{tr}\left({\bm{\Sigma}_{2}}^{-1}{\bm{X}_{i}}^\top {\bm{\Sigma}_{1}}^{-1} \bm{X}_{i}\right) \right) \\
			+ \sum_{i=1}^{N}\log K_{\nu} \left( \sqrt{2 \operatorname{tr} ({\bm{\Sigma}_{2}}^{-1} {\bm{X}_{i}}^\top {\bm{\Sigma}_{1}}^{-1} \bm{X}_{i})}  \right).
		\end{multline*}
		
	\end{enumerate}
	In the next section, the existence of the proposed MLE are discussed.

	\section{Existence of estimators} 
	It is claimed that maximum likelihood estimators exist for the parameters $\bm{\Sigma}_{1}, \bm{\Sigma}_{2}$ of the matrix variate symmetric Laplace distribution if the sample size 
	\begin{equation*}
		N\ge \max \left(\frac{p}{q}, \frac{q}{p}\right).
	\end{equation*}
	If $q=1$, it reduces to the multivariate symmetric Laplace distribution with scale parameter $\bm{\Sigma}_{1}$. Hence, first, this claim is validated for $q=1$.
	
	\begin{theorem}
		\label{theorem:5}
		Let $\bm{Y}_1,\bm{Y}_2,\cdots \bm{Y}_N \overset{\text{i.i.d}}{\sim} \mathcal{SL}_{p}(\bm{\Sigma})$, then  maximum likelihood estimator exists almost surely for the parameters $\bm{\Sigma}$ of multivariate symmetric Laplace distribution if and only if the sample size 
		\[N\ge p.\]
	\end{theorem}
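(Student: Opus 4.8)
Here is how I would approach this. The plan is to treat the two implications separately: the ``only if'' direction is elementary — exhibit a degenerating family of scatter matrices along which the likelihood is unbounded — while the ``if'' direction is the substantive one and amounts to showing that the log-likelihood $\ell(\bm\Sigma)$ is continuous on the positive definite cone and tends to $-\infty$ as $\bm\Sigma$ leaves every compact subset of it, so that its maximum is attained.

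\emph{Necessity.} Suppose $N<p$. Almost surely each $\bm{Y}_i\neq\bm{0}$ and $V:=\operatorname{span}\{\bm{Y}_1,\dots,\bm{Y}_N\}$ has dimension $r\le N\le p-1$. Picking an orthonormal basis adapted to $V\oplus V^{\perp}$ and setting, for $\varepsilon>0$, $\bm\Sigma_{\varepsilon}$ equal to the identity on $V$ and to $\varepsilon I$ on $V^{\perp}$, one has $\bm{Y}_i^{\top}\bm\Sigma_{\varepsilon}^{-1}\bm{Y}_i=\|\bm{Y}_i\|_2^{2}$ independent of $\varepsilon$ while $\det\bm\Sigma_{\varepsilon}=\varepsilon^{\,p-r}$, so in
\[
\ell(\bm\Sigma)=-\tfrac N2\log\det\bm\Sigma+\tfrac{\nu}{2}\sum_{i}\log(\bm{Y}_i^{\top}\bm\Sigma^{-1}\bm{Y}_i)+\sum_{i}\log K_{\nu}\!\Big(\sqrt{2\,\bm{Y}_i^{\top}\bm\Sigma^{-1}\bm{Y}_i}\Big)
\]
only the first term moves: $\ell(\bm\Sigma_{\varepsilon})=-\tfrac N2(p-r)\log\varepsilon+\text{const}\to+\infty$ as $\varepsilon\downarrow0$. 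Hence $\sup\ell=+\infty$ is not attained, no MLE exists, and so existence of the MLE forces $N\ge p$.

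\emph{Sufficiency.} Assume $N\ge p$. Since the $\bm{Y}_i$ have a density, almost surely every $p$ of them are linearly independent; equivalently, for each $d<p$ at most $d$ of the $\bm{Y}_i$ lie in any $d$-dimensional subspace, and in particular $\operatorname{span}\{\bm{Y}_i\}=\mathbb{R}^{p}$ and $\min_i\|\bm{Y}_i\|_2>0$. On the open cone $\mathcal P$ of positive definite matrices each $\bm\Sigma\mapsto\bm{Y}_i^{\top}\bm\Sigma^{-1}\bm{Y}_i$ is continuous and positive and $K_{\nu}$ is continuous and positive on $(0,\infty)$, so $\ell$ is continuous on $\mathcal P$; thus it suffices to prove $\ell(\bm\Sigma_n)\to-\infty$ whenever $(\bm\Sigma_n)\subset\mathcal P$ leaves every compact subset of $\mathcal P$, for then the superlevel set $\{\bm\Sigma\in\mathcal P:\ell(\bm\Sigma)\ge\ell(I)\}$ is bounded and stays away from $\partial\mathcal P$, has compact closure in $\mathcal P$, and $\ell$ attains its global maximum there. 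To prove this coercivity I would first record, from the standard asymptotics $K_{\nu}(x)\le C_{\nu}x^{-|\nu|}$ for $0<x\le1$ (with $K_{0}(x)\le C_{\varepsilon}x^{-\varepsilon}$ for any $\varepsilon>0$) and $K_{\nu}(x)\le C_{\nu}x^{-1/2}e^{-x}$ for $x\ge1$, together with the identity $p+\nu-|\nu|=2$ for $p\ge2$, that for $g(q):=\tfrac{\nu}{2}\log q+\log K_{\nu}(\sqrt{2q})$ and suitable constants $c>0$, $D$, $A_p$, and $\gamma:=\max\{0,\tfrac{p-2}{2}\}+\varepsilon$ one has $g(q)\le D+\gamma\log^{+}(1/q)-c\sqrt q$ and $g(q)+\tfrac p2\log q\le A_p$ for all $q>0$, the latter quantity tending to $-\infty$ as $q\to0^{+}$ and as $q\to\infty$; here $\varepsilon$ is fixed small enough that $\tfrac p2-\gamma>0$. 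In the case $N=p$ this closes cleanly: with $M=[\bm{Y}_1\mid\cdots\mid\bm{Y}_p]$ and $\bm\Psi:=M^{\top}\bm\Sigma^{-1}M$ one has $\bm{Y}_i^{\top}\bm\Sigma^{-1}\bm{Y}_i=\Psi_{ii}$ and $\ell(\bm\Sigma)=\tfrac p2\log\det\bm\Psi+\sum_i g(\Psi_{ii})+\mathrm{const}$ with $\bm\Psi$ free in $\mathcal P$; Hadamard's and Fischer's inequalities give $\sum_i g(\Psi_{ii})\le pD+\gamma\bigl(p\log^{+}\lambda_{\max}(\bm\Psi)-\log\det\bm\Psi\bigr)$, whence $\ell\le(\tfrac p2-\gamma)\log\det\bm\Psi+\gamma p\log^{+}\lambda_{\max}(\bm\Psi)+pD$, and $\lambda_{\max}(\bm\Psi)\to\infty$ is handled separately since then $\max_i\Psi_{ii}\ge\tfrac1p\operatorname{tr}\bm\Psi\to\infty$ and $g(q)+\tfrac p2\log q\le A_p\to-\infty$. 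For general $N\ge p$ I would pass to a subsequence along which the ordered eigenvalues $\lambda_1^{(n)}\ge\cdots\ge\lambda_p^{(n)}$ have ratios $\lambda_j^{(n)}/\lambda_{j+1}^{(n)}$ converging in $[1,\infty]$ and the eigenvectors converge, group the eigenvalues into maximal blocks of comparable magnitude separated by infinite ratios, and for each block use ``at most $d$ observations in a $d$-dimensional subspace'' to produce enough observations whose dominant inverse scale is that block: the ensuing $-c\sqrt{\,\bm{Y}_i^{\top}\bm\Sigma_n^{-1}\bm{Y}_i}$ or $\gamma\log^{+}$ contributions then dominate that block's share $-\tfrac N2(\dim)\log(\text{scale})$ of $-\tfrac N2\log\det\bm\Sigma_n$, because $\sqrt{1/\lambda}$ outgrows $\log(1/\lambda)$ and $\tfrac p2-\gamma>0$.

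The hard part is precisely this last step — the multiscale bookkeeping that matches each unbounded eigenvalue scale of $\bm\Sigma_n$ (contributing either sign to $-\tfrac N2\log\det\bm\Sigma_n$) against the correct collection of quadratic forms $\bm{Y}_i^{\top}\bm\Sigma_n^{-1}\bm{Y}_i$ via the combinatorial subspace bound, while keeping the heavy-tailed Bessel asymptotics under uniform control; by contrast the degenerating family, the continuity of $\ell$, the Bessel estimates, and the reduction of existence to coercivity are all routine.
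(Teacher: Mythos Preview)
Your approach is genuinely different from the paper's and, in fact, targets a stronger statement. The paper interprets ``the MLE exists'' operationally through the EM algorithm: it simply shows that the EM update $\hat{\bm\Sigma}=\tfrac{1}{N}\sum_i v_i\bm{Y}_i\bm{Y}_i^{\top}$ is positive definite with probability one if and only if $N\ge p$, via the elementary observation that $p$ i.i.d.\ absolutely continuous vectors in $\mathbb{R}^p$ are almost surely linearly independent, so that the matrix $[\sqrt{v_1}\bm{Y}_1,\dots,\sqrt{v_N}\bm{Y}_N]$ has full row rank. This is a few lines long and is exactly what one needs for the iterative scheme of Section~3 to be well defined at every step, but it does not by itself establish that the observed-data log-likelihood $\ell(\bm\Sigma)$ attains its supremum on the positive definite cone. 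Your coercivity programme---an explicit degenerating family $\bm\Sigma_\varepsilon$ for $N<p$, and Bessel-tail estimates combined with the ``at most $d$ observations in a $d$-plane'' subspace count for $N\ge p$---aims precisely at that stronger conclusion. The trade-off is clear: the paper's route is short and sufficient for the algorithm to run, while yours is substantially more work (and, as you acknowledge, the multiscale eigenvalue-block bookkeeping for general $N\ge p$ is the genuinely delicate step that remains to be written out) but would deliver existence of a bona fide global maximizer of $\ell$. The necessity directions agree in spirit---both rest on linear dependence of the sample when $N<p$---though your explicit $\bm\Sigma_\varepsilon$ makes the unboundedness of the likelihood concrete rather than inferring nonexistence from singularity of the EM iterate.
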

	\begin{proof}
		Consider the matrix
		\[\bm{Y}= \begin{pmatrix}
			\sqrt{v_{1}} {\bm{Y}_{1}},
			\sqrt{v_{2}} {\bm{Y}_{2}},
			\cdots 
			\sqrt{v_{N}} {\bm{Y}_{N}}
		\end{pmatrix} \in \mathbb{R}^{p\times N},\] 
		where $v_{1}, v_{2},\cdots, v_{N}$ are the conditional expectations which is calculated in step 2 in section 3.2 and all the $v_{i}$'s are positive real numbers. 
		
         Consider,
		\[\bm{A}=\sum_{i=1}^{N} v_{i}\bm{Y}_{i} {\bm{Y}_{i}}^\top =  \bm{Y} \bm{Y}^\top,\]
		and $rank( \bm{Y} \bm{Y}^\top)= rank(\bm{Y})$. 
		
		It will be sufficient to show that $\bm{Y}$ has rank $p$ with probability $1$ if and only if $N\ge p$, which is equal to the fact that $rank(\bm{Y})<p$, if $N<p$. Thus, it will be sufficient to show that $\bm{Y}$ has rank $p$ with probability $1$, when $N=p$.
		
		 Let $\{x_{1},x_{2},\cdots,x_{p-1}\}$ be a set of vectors in $\mathbb{R}^p$ and let $S\{x_{i} | i=1,2,\cdots,p-1\}$ be the subspace spanned by $x_{1},x_{2},\cdots,x_{p-1}$. Now, $\Pr[\bm{Y}_{i}\in S\{x_{i} | i=1,2,\cdots,p-1\}]=0$, as $\Sigma_{p\times p}$ is positive definite matrix. Let $F$ be the joint density function of $\bm{Y}_{2},\bm{Y}_{3},\cdots,\bm{Y}_{p}$.
		Now,
    \begin{align*}
			P\left(rank(\bm{Y})<p\right)&=P\left[\sqrt{v_{1}} {\bm{Y}_{1}}, \sqrt{v_{2}} {\bm{Y}_{2}}, \cdots 	\sqrt{v_{N}} {\bm{Y}_{p}} \hspace{.7mm}\text{are linearly dependent}\right]\\
	     &=P[\bm{Y}_1,\bm{Y}_2,\cdots,\bm{Y}_{p}\hspace{.7mm}\text{are linearly dependent}]\\
			  &\le \sum_{i=1}^{p}P[\bm{Y}_{i}\in S\{\bm{Y}_1,\bm{Y}_2,\cdots,\bm{Y}_{i-1},\bm{Y}_{i+1},\cdots,\bm{Y}_{p}\}]\\
			  &=p P[\bm{Y}_{1}\in S\{\bm{Y}_2,\bm{Y}_3,\cdots,\bm{Y}_{p}\}]\\
			  &= p \int_{\mathbb{R}^{p(p-1)}}P[\bm{Y}_{1}\in S\{\bm{Y}_2,\bm{Y}_3,\cdots,\bm{Y}_{p}\}| \bm{Y}_2,\bm{Y}_3,\cdots,\bm{Y}_{p}]\, dF(\bm{Y}_2,\cdots,\bm{Y}_{p})\\
			  &=p   \int_{\mathbb{R}^{p(p-1)}}P[\bm{Y}_{1}\in S\{\bm{Y}_2,\bm{Y}_3,\cdots,\bm{Y}_{p}\}| \bm{Y}_2,\bm{Y}_3,\cdots,\bm{Y}_{p}] \, dF\\
			  &=p   \int_{\mathbb{R}^{p(p-1)}} 0\, dF\\
			  &=0.
		\end{align*}
    	Therefore, $\hat{\bm{\Sigma}}=\frac{1}{N}\sum_{i=1}^{N} v_{i}\bm{Y}_{i} {\bm{Y}_{i}}^\top$ is positive definite with probability $1$ if and only if $N\ge p$, hence the MLE exists almost surely.
	\end{proof}

	\begin{theorem}
		\label{theorem:6}
		Let $\bm{X}_1,\bm{X}_2,\ldots ,\bm{X}_N \overset{\text{i.i.d}}{\sim} \mathcal{MSL}_{p,q}(\bm{\Sigma}_{1},\bm{\Sigma}_{2})$, then  maximum likelihood estimators exists almost surely for the parameters $\bm{\Sigma}_{1}, \bm{\Sigma}_{2}$ of matrix variate symmetric Laplace distribution if and only if the sample size 
		\begin{equation*}
			N\ge \max \left(\frac{p}{q}, \frac{q}{p}\right).
		\end{equation*}
	\end{theorem}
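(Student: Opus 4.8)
The plan is to mirror the argument of Theorem~\ref{theorem:5}, reducing the question to whether two Gram-type matrices built from the data are positive definite. By the score equations \eqref{eq:19}--\eqref{eq:20}, any positive definite stationary point of the likelihood --- in particular the MLE, should it exist --- must be of the form $\hat{\bm{\Sigma}}_1 = \frac{1}{qN}\bm{A}_1$ and $\hat{\bm{\Sigma}}_2 = \frac{1}{pN}\bm{A}_2$, where $\bm{A}_1 = \sum_{i=1}^N v_i \bm{X}_i \bm{G}_2 \bm{X}_i^\top$ and $\bm{A}_2 = \sum_{i=1}^N v_i \bm{X}_i^\top \bm{G}_1 \bm{X}_i$ for some positive definite matrices $\bm{G}_1,\bm{G}_2$ (the companion estimates) and strictly positive scalars $v_i$ (the conditional expectations of Lemma~\ref{lemma:2}, which are positive since $\bm{X}_i\neq\bm 0$ almost surely and $K_\mu(x)>0$ for $x>0$). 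So it suffices to show $\bm{A}_1$ is $p\times p$ positive definite almost surely iff $qN\ge p$, and $\bm{A}_2$ is $q\times q$ positive definite almost surely iff $pN\ge q$; intersecting the two conditions gives $N\ge\max(p/q,q/p)$.

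First I would factor the inner matrices as $\bm{G}_2 = \bm{C}\bm{C}^\top$ and $\bm{G}_1 = \bm{D}\bm{D}^\top$ with $\bm{C},\bm{D}$ nonsingular (Cholesky factors). Then $\bm{X}_i\bm{G}_2\bm{X}_i^\top = (\bm{X}_i\bm{C})(\bm{X}_i\bm{C})^\top$, so $\bm{A}_1 = \bm{M}\bm{M}^\top$ where $\bm{M} = \bigl[\sqrt{v_1}\,\bm{X}_1\bm{C}\,\big|\,\cdots\,\big|\,\sqrt{v_N}\,\bm{X}_N\bm{C}\bigr]\in\mathbb{R}^{p\times qN}$ is the column-wise concatenation; hence $\bm{A}_1$ is positive definite iff $\operatorname{rank}(\bm{M})=p$. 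Since $\bm{C}$ is invertible and each $v_i>0$, the column space of $\bm{M}$ equals $\sum_{i=1}^N\operatorname{col}(\bm{X}_i)$, the sum of the column spaces of the $\bm{X}_i$ in $\mathbb{R}^p$ --- in particular $\operatorname{rank}(\bm{M})$ does not depend on the $v_i$ or $\bm{C}$ at all, and $\bm{A}_1$ is positive definite iff $\operatorname{rank}\bigl[\bm{X}_1|\cdots|\bm{X}_N\bigr]=p$. Symmetrically $\bm{A}_2 = \bm{K}\bm{K}^\top$ with $\bm{K}=\bigl[\sqrt{v_1}\,\bm{X}_1^\top\bm{D}\,\big|\,\cdots\,\big|\,\sqrt{v_N}\,\bm{X}_N^\top\bm{D}\bigr]\in\mathbb{R}^{q\times pN}$, so $\bm{A}_2$ is positive definite iff $\operatorname{rank}\bigl[\bm{X}_1^\top|\cdots|\bm{X}_N^\top\bigr]=q$.

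The ``only if'' direction is then deterministic: $\operatorname{rank}\bigl[\bm{X}_1|\cdots|\bm{X}_N\bigr]\le\min(p,qN)$, so $qN<p$ forces $\bm{A}_1$ singular, and $pN<q$ forces $\bm{A}_2$ singular, so in either case no positive definite stationary point exists and the MLE does not exist. For the ``if'' direction I would show that when $qN\ge p$ the $p\times qN$ matrix $[\bm{X}_1|\cdots|\bm{X}_N]$ has full row rank $p$ almost surely. Two routes are available: imitate the conditioning/union-bound computation of Theorem~\ref{theorem:5} (each $\bm{X}_i$ has a density with respect to Lebesgue measure on $\mathbb{R}^{p\times q}$, being the scale mixture $\sqrt{W}\bm{Z}$ of a non-degenerate Gaussian, so conditionally on the other columns it avoids any fixed proper subspace with probability one); or, more directly, note that $\{\operatorname{rank}[\bm{X}_1|\cdots|\bm{X}_N]<p\}$ is the common zero set of the $p\times p$ minors, a proper algebraic subvariety of $\mathbb{R}^{pqN}$ (proper precisely because $qN\ge p$ admits full-rank configurations), hence Lebesgue-null, hence a null event for the absolutely continuous joint law of $(\bm{X}_1,\dots,\bm{X}_N)$. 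The same applies to $\bm{A}_2$ when $pN\ge q$. Combining, both $\hat{\bm{\Sigma}}_1$ and $\hat{\bm{\Sigma}}_2$ are positive definite almost surely iff $N\ge\max(p/q,q/p)$.

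I expect the main obstacle to be making the ``if'' direction fully rigorous: besides the almost-sure full-rank statement above, there is the logical step --- glossed over even in Theorem~\ref{theorem:5} --- that positive-definiteness of these Gram matrices is genuinely equivalent to \emph{existence of a maximizer} of the likelihood over pairs of positive definite matrices. For a self-contained treatment one should verify that when a Gram matrix is singular the likelihood fails to attain its supremum (approached as $\bm{\Sigma}_1$ or $\bm{\Sigma}_2$ degenerates), and that when both are nonsingular the fixed-point iteration of Section~3.3 does converge to a bona fide maximizer, so that the stationarity characterization used above is legitimate.
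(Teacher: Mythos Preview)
Your proposal is correct and follows essentially the same route as the paper: rewrite $\hat{\bm{\Sigma}}_1$ and $\hat{\bm{\Sigma}}_2$ as Gram-type quadratic forms in the stacked data (the paper uses the block form with middle matrix $\bm{I}_N\otimes\tilde{\bm{\Sigma}}_j^{-1}$, which is equivalent to your Cholesky factorization) and read off the positive-definiteness conditions $Nq\ge p$ and $Np\ge q$ from the ranks. Your treatment is in fact more careful than the paper's own argument, which neither spells out the almost-sure full-rank step nor addresses the gap you flag concerning the equivalence between positive-definiteness of these Gram matrices and actual existence of a likelihood maximizer.
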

	
	\begin{proof}
		Consider the maximum likelihood estimators of $\bm{\Sigma}_{1}$ and $\bm{\Sigma}_{2}$, from the EM algorithm, 
		\begin{equation*}
			\hat{\bm{\Sigma}}_{1}=\frac{1}{qN}\sum_{i=1}^{N} v_{i} \bm{X}_{i}\tilde{\bm{\Sigma}}_{2}^{-1} {\bm{X}_{i}}^\top,
		\end{equation*}  
		\begin{equation*}
			\hat{\bm{\Sigma}}_{2}=\frac{1}{pN}\sum_{i=1}^{N} v_{i} \bm{X}_{i}^\top \tilde{\bm{\Sigma}}_{1}^{-1} {\bm{X}_{i}},
		\end{equation*}
		where $\tilde{\bm{\Sigma}}_{1}, \tilde{\bm{\Sigma}}_{2}$ are the initial estimates of $\bm{\Sigma}_{1}$ and $\bm{\Sigma}_{2}$, respectively, and $v_{i}$'s are the conditional expectations, which are calculated in the second step of this algorithm, and all the $v_{i}$'s are positive real numbers.
		Now, rewrite the above equations in matrix notation, with $\bm{I}_{N}$ is $N\times N$ identity matrix,
		\begin{multline*}
			\hat{\bm{\Sigma}}_{1}=\frac{1}{qN}\begin{pmatrix}
				\sqrt{v_1}\bm{X}_{1} &\sqrt{v_2} \bm{X}_{2} & \cdots & \sqrt{v_N}\bm{X}_{N}
			\end{pmatrix}
			\left\{ \bm{I}_{N}\otimes \tilde{\bm{\Sigma}}_{2}^{-1}\right\} \\ \begin{pmatrix}
				\sqrt{v_1}\bm{X}_{1} &\sqrt{v_2} \bm{X}_{2} & \cdots & \sqrt{v_N}\bm{X}_{N}
			\end{pmatrix}^\top,
		\end{multline*}
		\begin{multline*}
			\hat{\bm{\Sigma}}_{2}=\frac{1}{pN}\begin{pmatrix}
				\sqrt{v_1}\bm{X}_{1}^\top &\sqrt{v_2} \bm{X}_{2}^\top & \cdots & \sqrt{v_N}\bm{X}_{N}^\top
			\end{pmatrix}
			\left\{ \bm{I}_{N}\otimes \tilde{\bm{\Sigma}}_{1}^{-1}\right\} \\
			\begin{pmatrix}
				\sqrt{v_1}\bm{X}_{1}^\top &\sqrt{v_2} \bm{X}_{2}^\top & \cdots & \sqrt{v_N} \bm{X}_{N}^\top
			\end{pmatrix}^\top.
		\end{multline*}
		
		Thus, the matrices $\hat{\bm{\Sigma}}_{1}$ and $\hat{\bm{\Sigma}}_{2}$ are quadratic forms in \[\begin{pmatrix}
			\sqrt{v_1}\bm{X}_{1} &\sqrt{v_2} \bm{X}_{2} & \cdots & \sqrt{v_N}\bm{X}_{N}
		\end{pmatrix}\] and
		\[\begin{pmatrix}
			\sqrt{v_1}\bm{X}_{1}^\top &\sqrt{v_2} \bm{X}_{2}^\top & \cdots & \sqrt{v_N}\bm{X}_{N}^\top
		\end{pmatrix},\] respectively, and the rank of these matrices satisfies the following conditions
		\begin{center}
			$rank(\hat{\bm{\Sigma}}_{1}) =rank \left\{ \bm{I}_{N}\otimes \tilde{\bm{\Sigma}}_{2}^{-1}\right\} =N q$ if and only if $\tilde{\bm{\Sigma}}_{2}$ is positive definite with probability $1$;
		\end{center}
		\begin{center}
			$rank(\hat{\bm{\Sigma}}_{2}) =rank \left\{ \bm{I}_{N}\otimes \tilde{\bm{\Sigma}}_{1}^{-1}\right\} =N p$ if and only if $\tilde{\bm{\Sigma}}_{1}$ is positive definite with probability $1$.
		\end{center}
		Hence, maximum likelihood estimators $\hat{\bm{\Sigma}}_{1}$ and $\hat{\bm{\Sigma}}_{2}$ are positive definite with probability $1$ if and only if $Nq\ge p$ and $Np\ge q$, or $N\ge\frac{p}{q}$ and $N\ge \frac{q}{p}$, which implies that $N\ge \max \left(\frac{p}{q}, \frac{q}{p}\right)$.
		
	\end{proof}
	
  \begin{note}
	Theorem \ref{theorem:5} is a special case of the theorem \refeq{theorem:6} with $q=1$.
 \end{note}

\section{The performance of the proposed MLE} 
	
	\subsection{Comparision of estimators of \texorpdfstring{$\bm{\Sigma}$}{PDFstring} in \texorpdfstring{$\mathcal{SL}_{p}(\bm{\Sigma})$}{PDFstring}}
	In this subsection, we compare the EM estimator with another estimator of $\bm{\Sigma}$, proposed by \cite{EKL}, \cite{KS}
	\begin{equation}
		\label{eq:25}
		\bm{\Sigma}^* = \frac{1}{N-1} \sum_{i=1}^{N} (\bm{Y}_{i}-\overline{\bm{Y}}) (\bm{Y}_{i}-\overline{\bm{Y}})^{\top},
	\end{equation}
 where $N$ is sample size and $\overline{\bm{Y}}$ is the empirical mean of the data.

The performance of estimators is compared with respect to the bias and the mean Euclidean distance
\begin{enumerate}
	\item Empirical bias :- $\| (\hat{\bm{\Sigma}}) _{m}- \bm{\Sigma}\|_{2}$.
	\item The mean Euclidean distance :- $ \| \hat{\bm{\Sigma}}- \bm{\Sigma}\|_{2,m}.$
\end{enumerate}
	$(\hat{\bm{\Sigma}})_{m}$ denotes the empirical mean of estimate of $ \bm{\Sigma}$ over all simulations, and $\|.\|_{2,m}$ denotes the empirical mean of norms over all simulations. For simulation, three types of structures are considered for $\bm{\Sigma}$ named as Cases $1$-$6$: diagonal, block diagonal and full matrices, for $p=6$ and $p=10$. The sample size ($N$) is taken as $10, 20, 30, 50, 70, 100, 150$ and $200$. The number of simulation runs, $s$ is $200$ for all sample sizes, and $\epsilon=10^{-11}$ 
  
     \textbf{Case $1.$} $\bm{\Sigma} =  \scalebox{0.75} {$ \setlength{\arraycolsep}{3pt}\begin{bmatrix}
			 5&0&0&0&0&0\\0&4&0&0&0&0\\ 0&0&3.5&0&0&0\\0&0&0&3&0&0\\0&0&0&0&2&0\\0&0&0&0&0&1
		\end{bmatrix}$},$  \textbf{Case $2.$}  $\bm{\Sigma}=  \scalebox{0.75} {$\setlength{\arraycolsep}{3pt} \begin{bmatrix} 3&1.5&1&0&0&0\\1.5&2&0.5&0&0&0\\1&0.5&1&0&0&0\\0&0&0&4&1&2\\0&0&0&1&5&3\\0&0&0&2&3&6 \end{bmatrix}$},$ 
	  
	\textbf{Case $3.$} $\bm{\Sigma} = \scalebox{0.75} {$\setlength{\arraycolsep}{3pt} \begin{bmatrix}20&3&2&1&4&5\\3&25&6&2&3&1\\2&6&30&7&5&4\\1&2&7&35&6&3\\4&3&5&6&40&8\\5&1&4&3&8&45 \end{bmatrix}$},$ 
	
    \textbf{Case $4.$}$\bm{\Sigma} =  \scalebox{0.85} {$\setlength{\arraycolsep}{3pt} \begin{bmatrix} 6&0&0&0&0&0&0&0&0&0\\0&5.5&0&0&0&0&0&0&0&0\\ 0&0&5&0&0&0&0&0&0&0\\0&0&0&4&0&0&0&0&0&0\\0&0&0&0&3.5&0&0&0&0&0\\0&0&0&0&0&3&0&0&0&0\\0&0&0&0&0&0&2.5&0&0&0\\0&0&0&0&0&0&0&2&0&0\\ 0&0&0&0&0&0&0&0&1.5&0\\0&0&0&0&0&0&0&0&0&1 \end{bmatrix}$},$ 
	
 	\textbf{Case $5.$} $\bm{\Sigma} =  \scalebox{0.85} {$\setlength{\arraycolsep}{3.5pt}\begin{bmatrix}	5&3&2.5&2&1.5&0&0&0&0&0\\3&4&2&1.5&1&0&0&0&0&0\\2.5&2&3&1&0.5&0&0&0&0&0\\2&1.5&1&2&0.2&0&0&0&0&0\\1.5&1&0.5&0.2&1&0&0&0&0&0\\0&0&0&0&0&6&2&1&0.5&1.5\\0&0&0&0&0&2&5&1.2&0.8&1\\ 0&0&0&0&0&1&1.2&4&1&0.6\\0&0&0&0&0&0.5&0.8&1&3.5&0.9\\0&0&0&0&0&1.5&1&0.6&0.9&4\end{bmatrix}$},$
	
  	\textbf{Case $6.$} $\bm{\Sigma} =  \scalebox{0.95} {$\setlength{\arraycolsep}{5.5pt} \begin{bmatrix} 10 & 2 & 1& 0.5 & 1 & 1.5 & 0.8 & 1.2 & 0.9 & 0.7 \\ 2 & 9 & 1.5 & 0.7 & 1.1 & 1.3 & 0.6 & 1 & 0.8 & 0.5\\ 1  & 1.5 & 8 & 1.2 & 0.9 & 0.7 & 1 & 1.1 & 0.6 & 0.4\\ 0.5 & 0.7 & 1.2 & 7 & 1.3 & 0.9 & 1.1 & 0.5 & 0.4 & 0.6\\ 1 & 1.1 & 0.9 & 1.3 & 9 & 1.2 & 1.4 & 0.8 & 1 & 0.7\\ 1.5 & 1.3 & 0.7 & 0.9 & 1.2 & 10 & 0.9 & 1.1 & 0.6 & 0.8\\ 0.8 & 0.6 & 1 & 1.1 & 1.4 & 0.9 & 8 & 1.3 & 1.2 & 0.5\\ 1.2 & 1 & 1.1 & 0.5 & 0.8 & 1.1 & 1.3 & 9 & 1 & 0.6\\ 0.9 & 0.8 & 0.6 & 0.4 & 1 & 0.6 & 1.2 & 1 & 8 & 0.7\\ 0.7 & 0.5 & 0.4 & 0.6 & 0.7 & 0.8 & 0.5 & 0.6 & 0.7 & 7 \end{bmatrix}$}.$

	In all the cases, the initial value $\hat{\bm{\Sigma}}_{(0)}$ for the EM estimator is taken as 
		\[ \hat{\bm{\Sigma}}_{(0)} =\frac{1}{N} \sum_{i=1}^{N} \bm{Y}_{i} \bm{Y}_{i}^\top . \]
		                         
	\begin{figure}[htbp]
		\centering
		\includegraphics[width=1\linewidth]{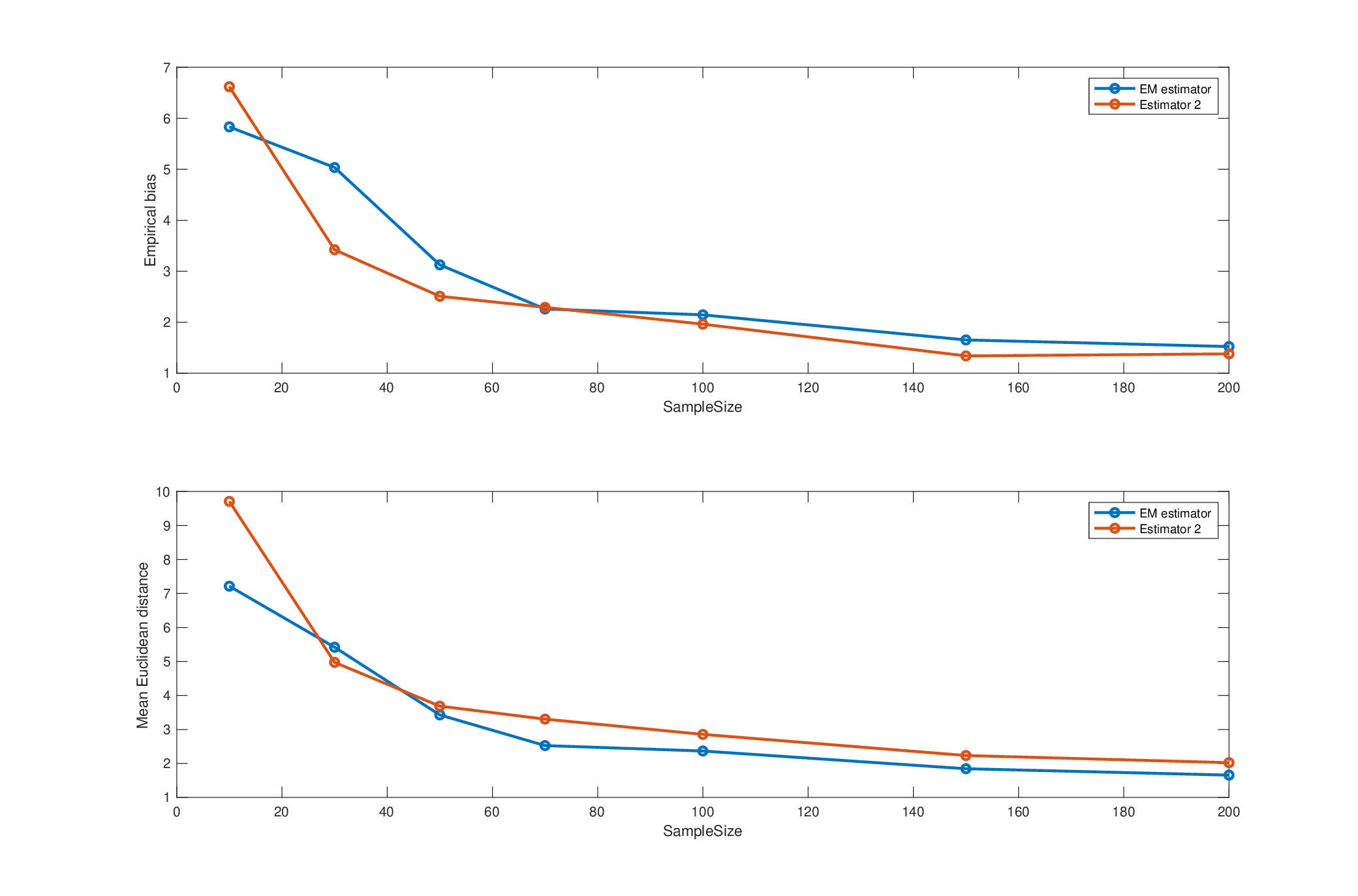}
		\caption{\textbf{Bias and mean Euclidean distance analysis of estimators $\hat{\bm{\Sigma}}$ and $\bm{\Sigma}^*$ over s simulation runs with respect to the sample size, for Case $1$.}}
		\label{fig:case1}
		\centering
		\includegraphics[width=1\linewidth]{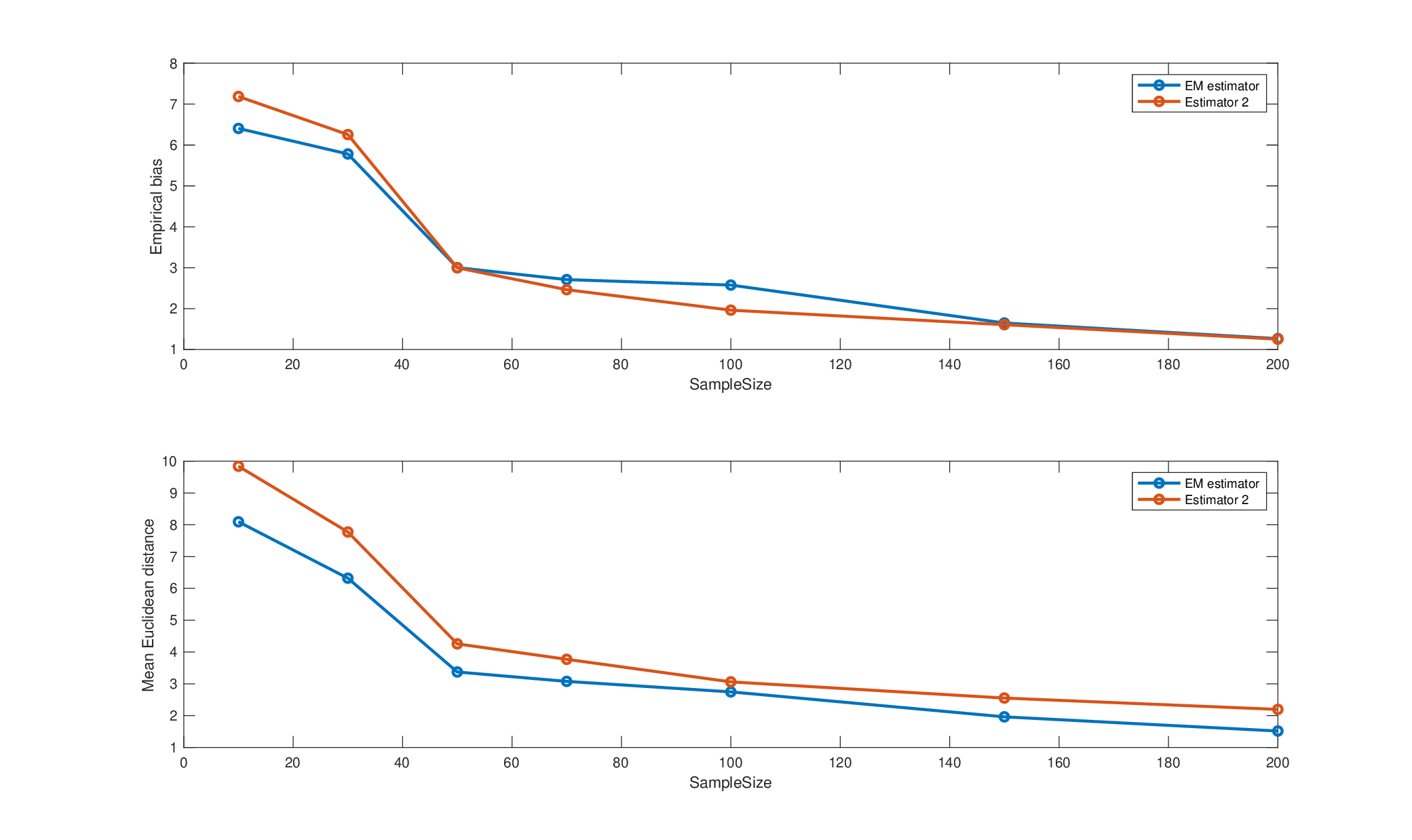}
		\caption{\textbf{Bias and mean Euclidean distance analysis of estimators $\hat{\bm{\Sigma}}$ and $\bm{\Sigma}^*$ over s simulation runs with respect to the sample size, for Case $2$.}}
		\label{fig:case2}
  \end{figure}

\begin{figure}[htbp]
		\centering
		\includegraphics[width=1\linewidth]{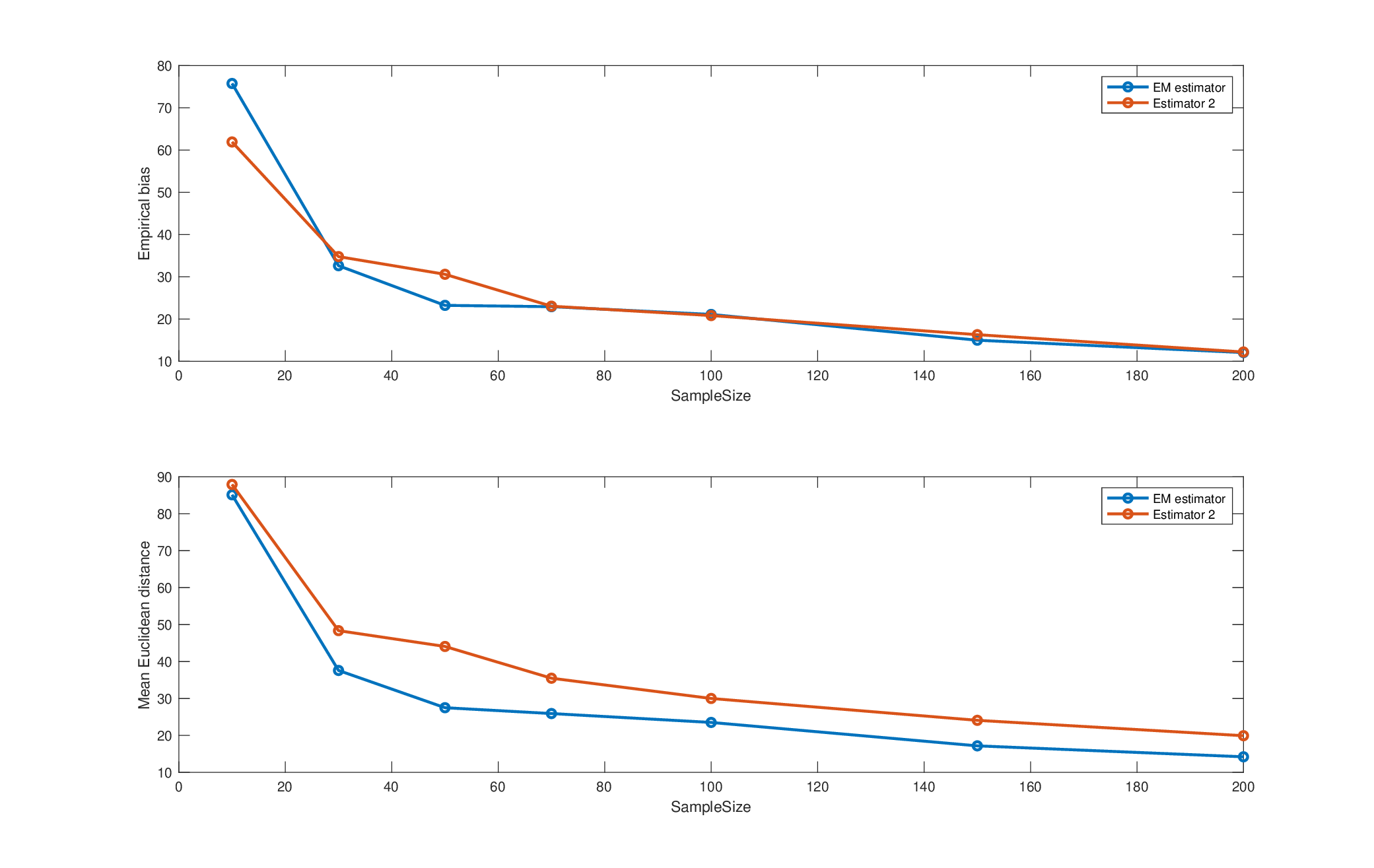}
		\caption{\textbf{Bias and mean Euclidean distance analysis of estimators $\hat{\bm{\Sigma}}$ and $\bm{\Sigma}^*$ over s simulation runs with respect to the sample size, for Case $3$.}}
		\label{fig:case3}
		\centering
		\includegraphics[width=1\linewidth]{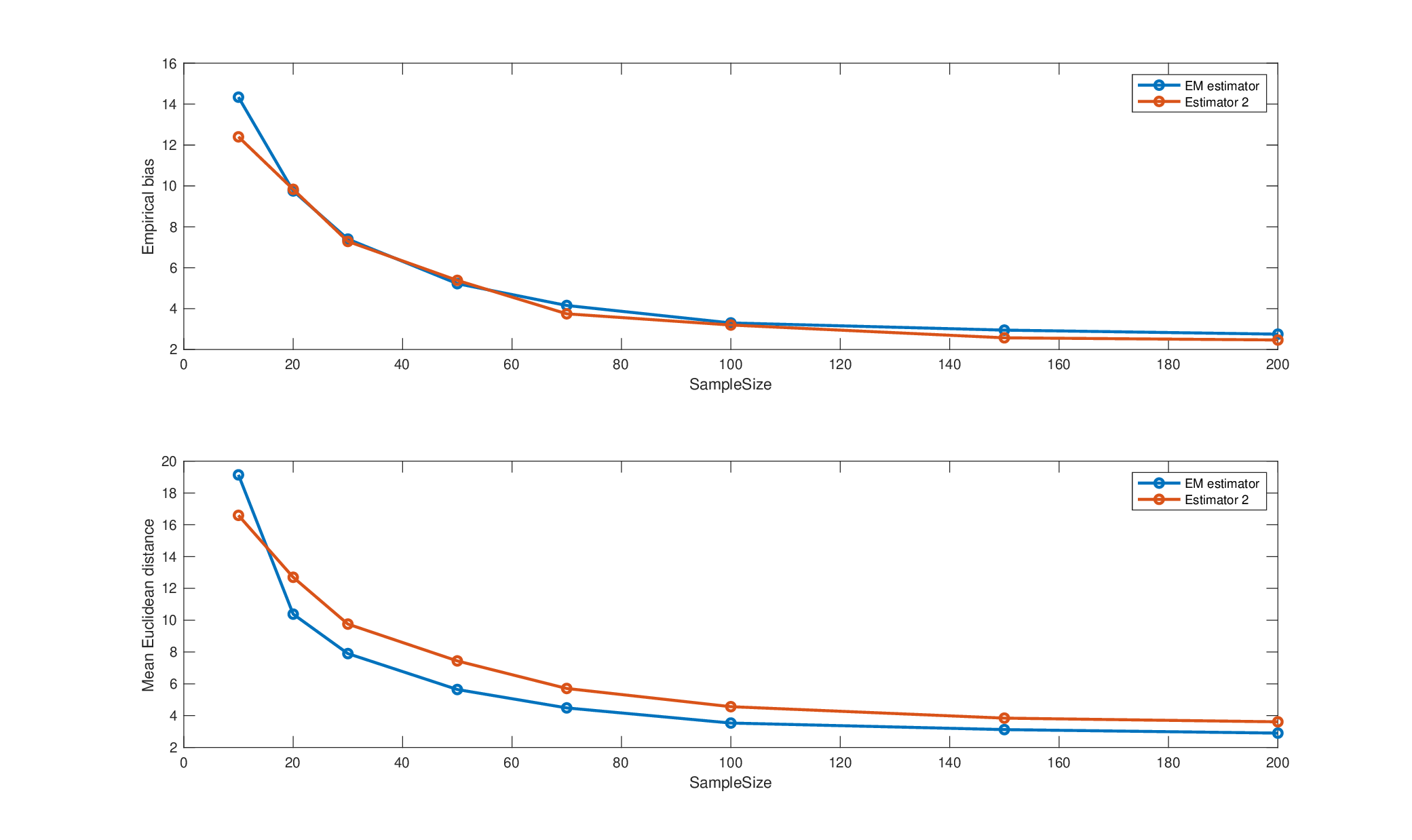}
		\caption{\textbf{Bias and mean Euclidean distance analysis of estimators $\hat{\bm{\Sigma}}$ and $\bm{\Sigma}^*$ over s simulation runs with respect to the sample size, for Case $4$.}}
		\label{fig:case4}
\end{figure}

\begin{figure}[htbp]
		\centering
		\includegraphics[width=1\linewidth]{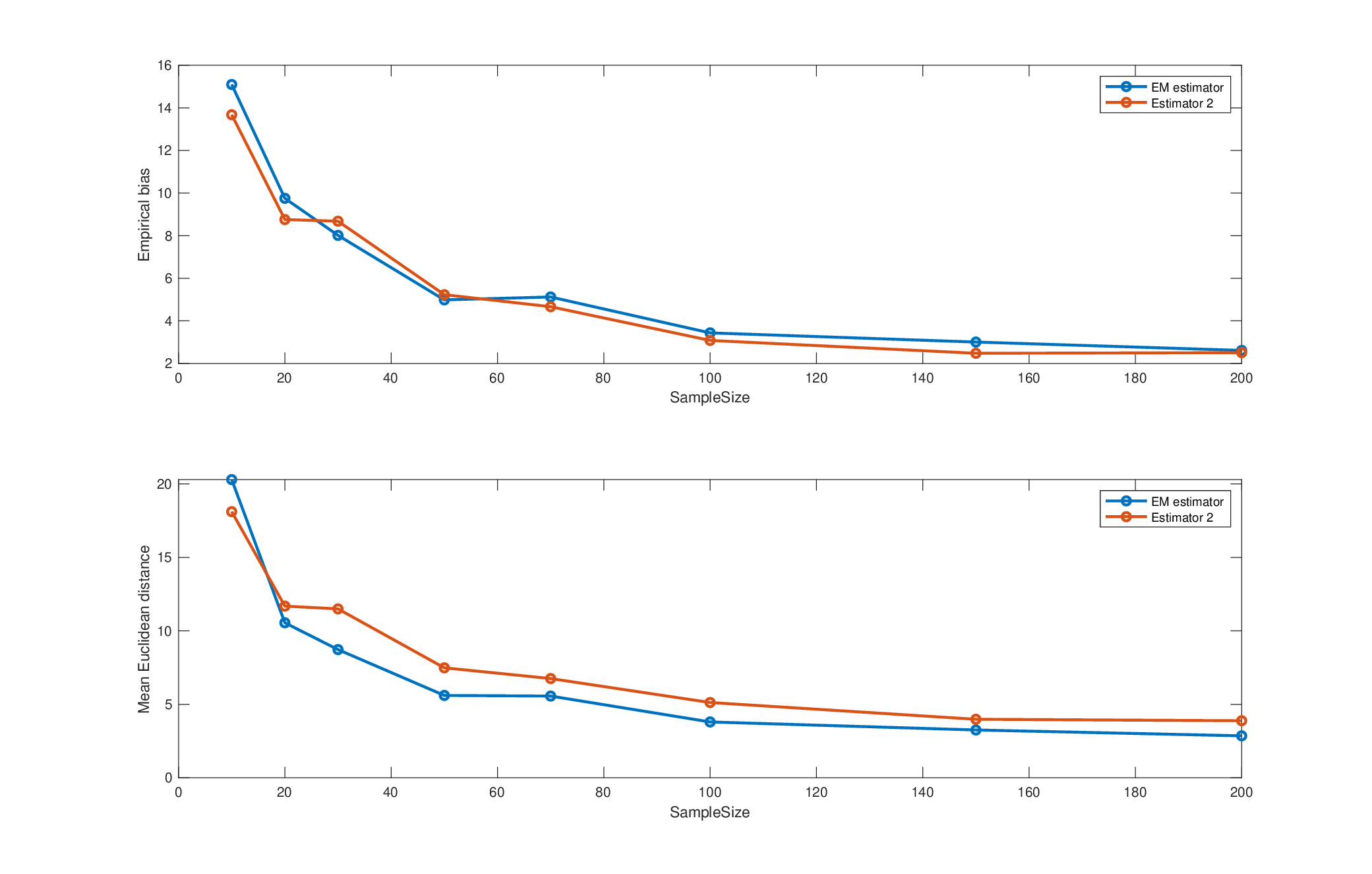}
		\caption{\textbf{Bias and mean Euclidean distance analysis of estimators $\hat{\bm{\Sigma}}$ and $\bm{\Sigma}^*$ over s simulation runs with respect to the sample size, for Case $5$.}}
		\label{fig:case5}

		\centering
		\includegraphics[width=1\linewidth]{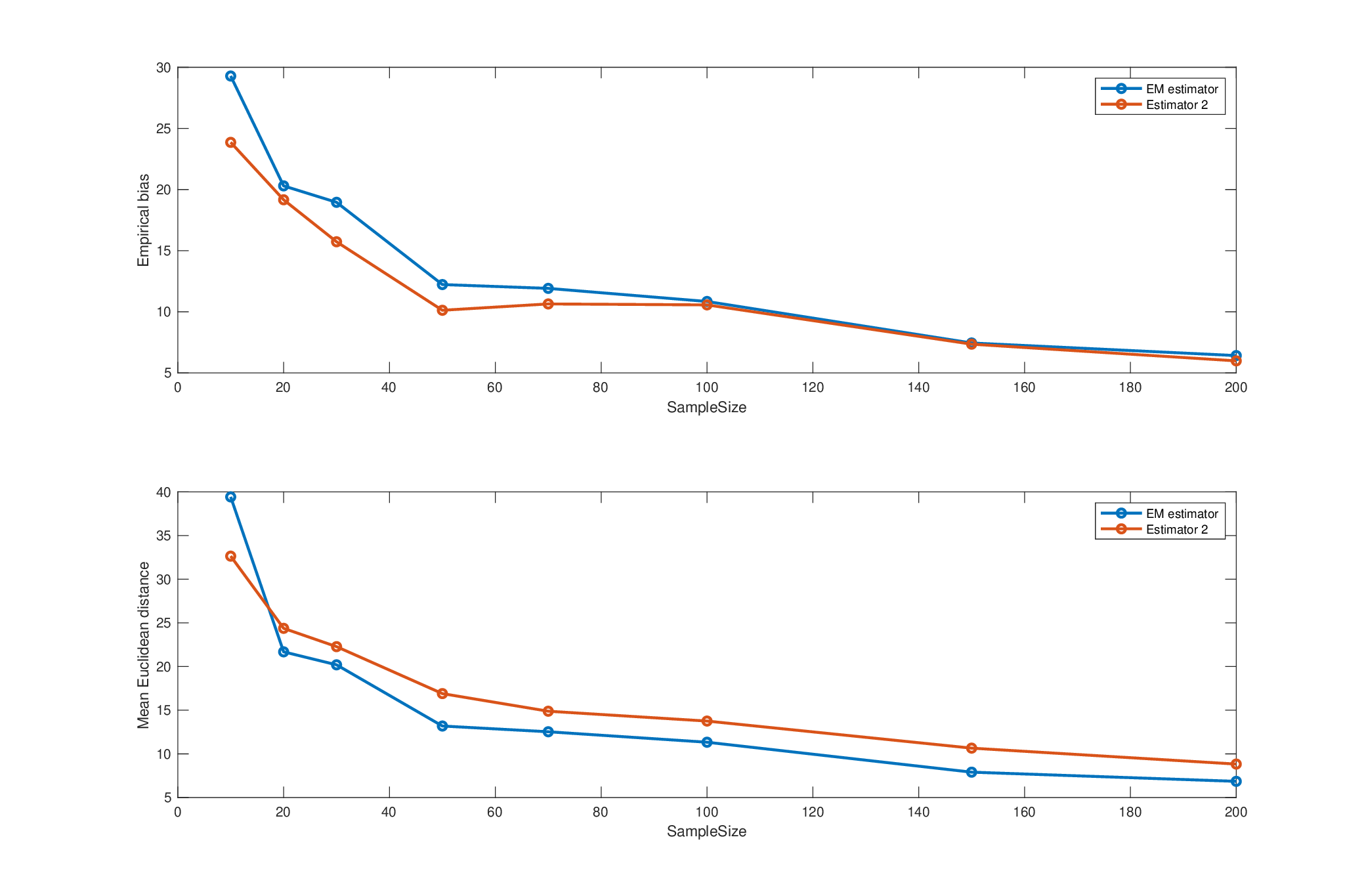}
		\caption{\textbf{Bias and mean Euclidean distance analysis of estimators $\hat{\bm{\Sigma}}$ and $\bm{\Sigma}^*$ over s simulation runs with respect to the sample size, for Case $6$.}}
		\label{fig:case6}
	\end{figure}

The simulation results lead to the following conclusions:
\begin{enumerate}[label=\textbullet]
	\item The figures $1$-$6$ illustrate the empirical bias and mean Euclidean distance of the EM estimator and the estimator $2$, denoted as $\bm{\Sigma}^*$ in \eqref{eq:25}. In all the cases, the empirical bias decreases as the sample size increases, for both estimators. Additionally, as the sample size increases, both estimator gives approximately the same bias.
	
	\item The mean Euclidean distance between the estimate and the parameter provides the accuracy of the estimator around the parameter. It is observed that the mean Euclidean distance decreases as the sample size increases in all the cases for both estimators. However, compared to estimator $\bm{\Sigma}^*$, the EM estimator demonstrates lower the mean Euclidean distance when the sample size increases. 
\end{enumerate}

From these results, we can conclude that the EM estimator is more consistent as compared to the estimator $\bm{\Sigma}^*$.

For the multivariate symmetric Laplace distribution $\mathcal{SL}_{p}(\bm{\Sigma})$, MLE of the parameter $\bm{\Sigma}$ exists if the sample size $N\ge p$. And it is observed that the maximum likelihood estimator of $\bm{\Sigma}$ uniquely exists for the same data sample, with different initial values in this algorithm.

	\subsection{The performance of the proposed estimators of \texorpdfstring{$\bm{\Sigma}_{1}$}{PDFstring} and \texorpdfstring{$\bm{\Sigma}_{2}$}{PDFstring} in  \texorpdfstring{$\mathcal{MSL}_{p,q}(\bm{\Sigma}_{1},\bm{\Sigma}_{2})$}{PDFstring} }
	
	In this section, the performance of proposed estimators of $\bm{\Sigma}_{1}$ and $\bm{\Sigma}_{2}$ in matrix variate symmetric Laplace distributions are shown using simulation. The performance of estimators $\hat{\bm{\Sigma}}_{1}, \hat{\bm{\Sigma}}_{2}$ is measured on following metric:  
	\begin{enumerate}
		\item Empirical bias :- $\| (\hat{\bm{\Sigma}}_{2} \otimes \hat{\bm{\Sigma}}_{1}) _{m}- \bm{\Sigma}_{2}\otimes \bm{\Sigma}_{1}\|_{2}$.
		\item Relative empirical bias :- \[\frac{\| (\hat{\bm{\Sigma}}_{2} \otimes \hat{\bm{\Sigma}}_{1}) _{m}- \bm{\Sigma}_{2}\otimes \bm{\Sigma}_{1}\|_{2}}{\| \bm{\Sigma}_{2} \otimes \bm{\Sigma}_{1} \|_{2}}.\]
		\item Mean Euclidean distance :- \[ \| \hat{\bm{\Sigma}}_{2} \otimes \hat{\bm{\Sigma}}_{1} - \bm{\Sigma}_{2} \otimes \bm{\Sigma}_{1}\|_{2,m}\]
		\item Relative mean Euclidean distance:- \[\frac{\| \hat{\bm{\Sigma}}_{2} \otimes \hat{\bm{\Sigma}}_{1} - \bm{\Sigma}_{2} \otimes \bm{\Sigma}_{1}\|_{2,m} }{\| \bm{\Sigma}_{2} \otimes \bm{\Sigma}_{1} \|_{2}}.\]
	\end{enumerate}
	$(\hat{\bm{\Sigma}}_{2} \otimes \hat{\bm{\Sigma}}_{1})_{m}$ denotes the empirical mean of estimates of $ \bm{\Sigma}_{2} \otimes \bm{\Sigma}_{1}$ over all simulations, and $\|.\|_{2,m}$ denotes the empirical mean of norms over all simulations.

	The simulations illustrate several key aspects of the estimators, including the convergence of the proposed algorithm, the asymptotic reduction of the empirical bias of $\hat{\bm{\Sigma}}_{2}\otimes \hat{\bm{\Sigma}}_{1}$ to zero and the mean Euclidean distance between the estimate and the actual parameter decreases, over time or in other words, the accuracy of the estimators increases. For simulation, four structures are considered for $\bm{\Sigma}_{1}$ and $\bm{\Sigma}_{2}$, given as Case 1-4. For all the cases, $p=5, q=3$ and the sample size $N$ as $5, 10, 15, 20, 30, 50$ and $100$. The number of simulation runs, $s$ is $200$ for all sample sizes.

	The four structures considered for $\bm{\Sigma}_{1}$ and $\bm{\Sigma}_{2}$ are:
	
	\renewcommand{\labelenumii}{\arabic{enumi}.\arabic{enumii}}
	
	\begin{enumerate}[label=\textbf{Case \arabic{enumi}.}]
		
		\item $\bm{\Sigma}_{1}= \setlength{\arraycolsep}{3pt} \begin{bmatrix}  1&0&0&0&0\\0&0.5&0&0&0\\ 0&0&2&0&0\\0&0&0&3&0\\0&0&0&0&0.65\end{bmatrix}$  and  $\bm{\Sigma}_{2}=  \setlength{\arraycolsep}{3pt} \begin{bmatrix} 3&0&0\\0&2&0\\0&0&1\end{bmatrix}, $\\
		
		$ (\| \bm{\Sigma}_{2} \otimes \bm{\Sigma}_{1} \| _{2} =14.3323 )$. \\
		
		\item $\bm{\Sigma}_{1}= \setlength{\arraycolsep}{3pt} \begin{bmatrix}1&0&0&0&0\\0&0.5&0&0&0\\ 0&0&2&0&0\\0&0&0&3&0\\0&0&0&0&0.65	\end{bmatrix}$ and 
		$\bm{\Sigma}_{2}= \setlength{\arraycolsep}{3pt} \begin{bmatrix} 3&1.5&1\\1.5&2&0\\1&0&1  \end{bmatrix},$	\\
		
		($\| \bm{\Sigma}_{2} \otimes \bm{\Sigma}_{1} \| _{2} =17.3432$). \\
		
		\item  $\bm{\Sigma}_{1}= \setlength{\arraycolsep}{3pt} \begin{bmatrix}	5&3&2.5&2&1.5\\3&4&2&1.5&1\\2.5&2&3&1&0.5\\2&1.5&1&2&0.2\\1.5&1&0.5&0.2&1
		\end{bmatrix}$ and
		$\bm{\Sigma}_{2}=  \setlength{\arraycolsep}{3pt} \begin{bmatrix} 3&0&0\\0&2&0\\0&0&1 \end{bmatrix},$	\\
		
		($\| \bm{\Sigma}_{2} \otimes \bm{\Sigma}_{1} \| _{2} =40.1388$).\\
		
		\item $\bm{\Sigma}_{1}= \setlength{\arraycolsep}{3pt} \begin{bmatrix}
			5&3&2.5&2&1.5\\3&4&2&1.5&1\\2.5&2&3&1&0.5\\2&1.5&1&2&0.2\\1.5&1&0.5&0.2&1	\end{bmatrix}$ and
		$\bm{\Sigma}_{2}= \setlength{\arraycolsep}{3pt} \begin{bmatrix} 	4&1&2\\1&5&3\\2&3&6	  \end{bmatrix},$ \\
		
		($\| \bm{\Sigma}_{2} \otimes \bm{\Sigma}_{1} \| _{2} =109.9245$).
		
	\end{enumerate}

	\begin{enumerate}[label=\textbf{Case \arabic{enumi}.}]
		\item Both the matrices $\bm{\Sigma}_{1}$ and $\bm{\Sigma}_{2}$ are diagonal.
		\item $\bm{\Sigma}_{1}$ is a diagonal matrix, while $\bm{\Sigma}_{2}$ is a non-diagonal matrix with less zeros.
		\item $\bm{\Sigma}_{1}$ is full matrix or have all non zero entries, while $\bm{\Sigma}_{2}$ is diagonal.
		\item Both $\bm{\Sigma}_{1}$ and $\bm{\Sigma}_{2}$ are full matrices.
	\end{enumerate}
	
	Observations from $\mathcal{MSL}_{p,q}(\bm{\Sigma}_{1},\bm{\Sigma}_{2})$ are generated using the representation in the theorem \ref{theorem:3}.
	
	In all cases, the initial values $\hat{\bm{\Sigma}}_{1} ^{(0)}$, $\hat{\bm{\Sigma}}_{2} ^{(0)}$ are taken as
	
	\[\hat{\bm{\Sigma}}_{1} ^{(0)}= \frac{1}{qN} \sum_{i=1}^{N} \bm{X}_{i} \bm{X}_{i}^\top,\] 
	\[ \hat{\bm{\Sigma}}_{2} ^{(0)}=\frac{1}{pN} \sum_{i=1}^{N} \bm{X}_{i}^\top \bm{X}_{i},\]
	
	where $N$ is the number of sample observations. The initial estimates depend upon the samples, and $\epsilon=10^{-11}$.

	The simulation results lead to the following conclusions:
	
	\begin{figure}[hbt!]
		\centering
		\includegraphics[width=1\linewidth]{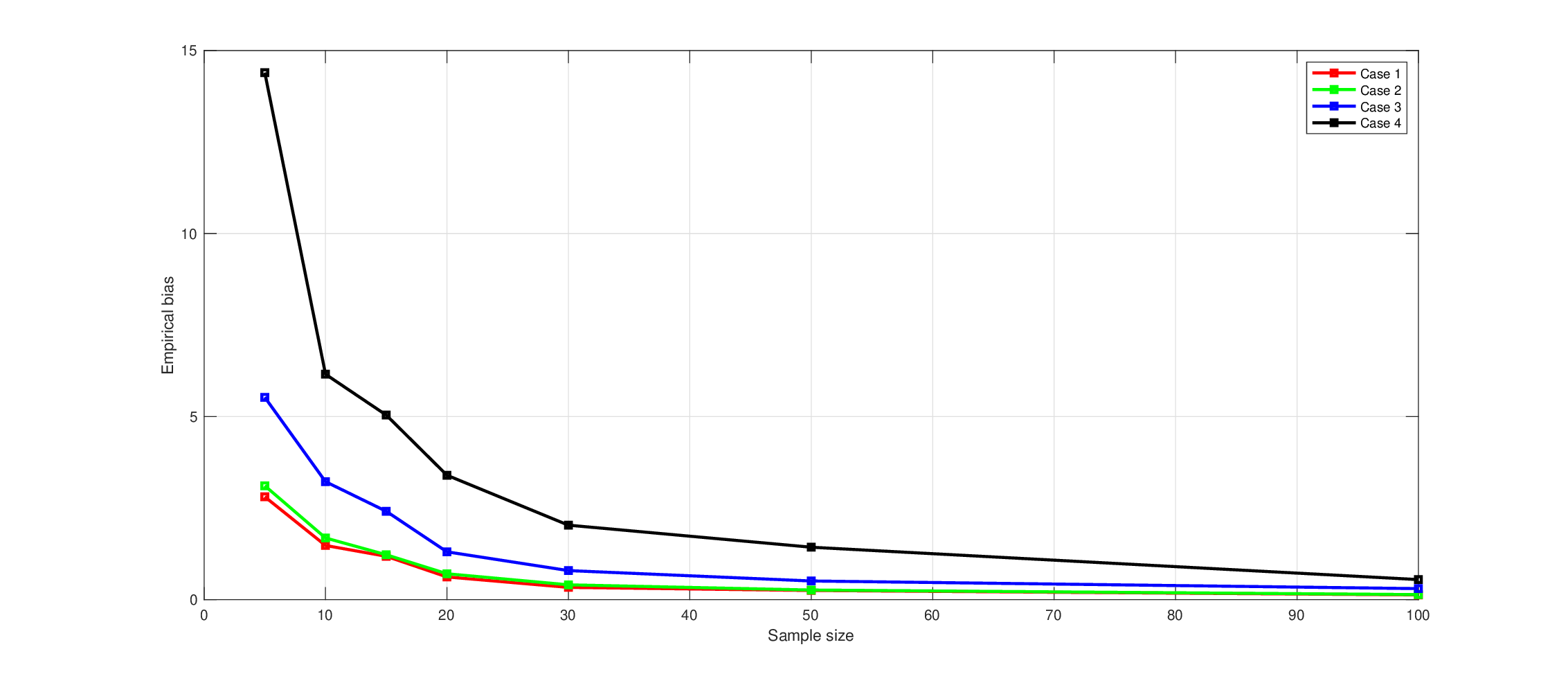}
		\caption{\textbf{\footnotesize Bias analysis of $\hat{\bm{\Sigma}}_{2} \otimes \hat{\bm{\Sigma}}_{1}$. Empirical bias is defined as $\| (\hat{\bm{\Sigma}}_{2} \otimes \hat{\bm{\Sigma}}_{1}) _{m}- \bm{\Sigma}_{2}\otimes \bm{\Sigma}_{1}\|_{2}$, where $(\hat{\bm{\Sigma}}_{2} \otimes \hat{\bm{\Sigma}}_{1}) _{m}$ denotes the empirical mean of $\hat{\bm{\Sigma}}_{2} \otimes \hat{\bm{\Sigma}}_{1} $ over $s$ simulation runs, with respect to the sample size for all four cases.}}
		\label{fig:figure1a}
		\centering
		\includegraphics[width=1\linewidth]{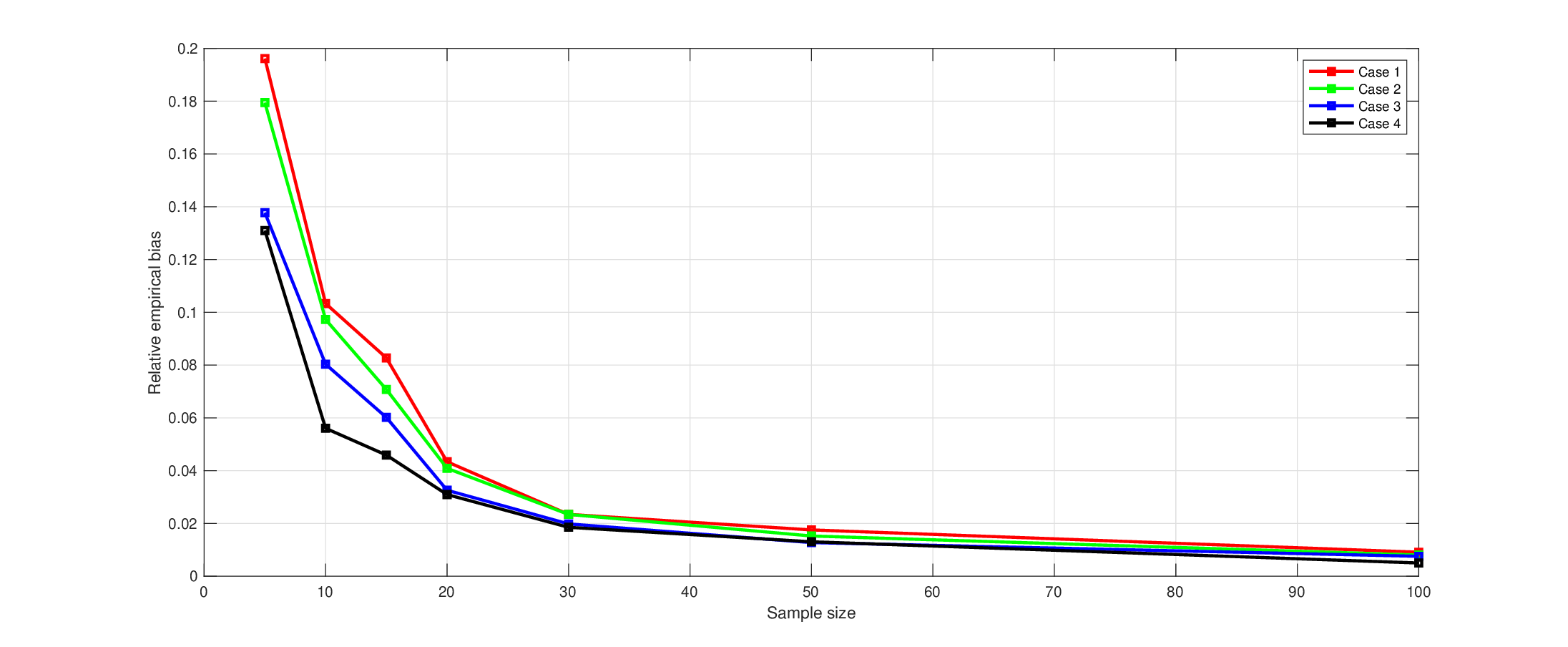}
		\caption{\footnotesize \textbf{Relative empirical bias, which is obtained by dividing the empirical bias with the Euclidean norm of $\bm{\Sigma}_{2} \otimes \bm{\Sigma}_{1}$, with respect to the sample size for all four cases.}}
		\label{fig:figure1b}
	\end{figure}

	\begin{table}[hbt!]
		\caption{\label{table:1}\textbf{\small Mean number of iterations required to meet the stopping criterion of this algorithm, for all the Cases (Case 1-4), with $\epsilon=10^{-11}$. For all Cases $p=5,q=3$ (Cases 1-4 are as given above in this section).}}
		\centering
		\begin{tabular}{|c|c|c|c|c|c|}
			\hline
			\textbf{N} & \textbf{s} & \textbf{Case 1}& \textbf{Case 2} & \textbf{Case 3}& \textbf{Case 4}\\
			\hline
			5 &200&103&101&110&120\\
			10&200&111&107&118&127\\
			15&200&114&112&121&129\\
			20&200&116&114&124&131\\
			30&200&118&118&125&133\\
			50&200&121&120&129&136\\
			100&200&124&124&132&140\\  
			\hline    
		\end{tabular}	 
		\vspace{2mm}
	\end{table}

	\begin{table}
		\caption{\label{table:2} \textbf{\small Mean Euclidean distance $\| \hat{\bm{\Sigma}}_{2} \otimes \hat{\bm{\Sigma}}_{1} - \bm{\Sigma}_{2} \otimes \bm{\Sigma}_{1}\|_{2,m}$  between estimate $\hat{\bm{\Sigma}}_{2} \otimes \hat{\bm{\Sigma}}_{1}$ and the parameter $\bm{\Sigma}_{2} \otimes \bm{\Sigma}_{1}$, where $m$ refers the mean of the Euclidean distance over s simulation runs.}}
		\centering	
		\begin{tabular}{|c|c|c|c|c|c|}
			\hline
			\textbf{N} & \textbf{s} & \textbf{Case 1}& \textbf{Case 2} & \textbf{Case 3}& \textbf{Case 4}\\
			\hline
			5&200&15.3985&16.2440&34.6415&87.7367\\
			10&200&8.9114&10.6120&23.3722&53.8132\\
			15&200&7.2226&8.1232&17.4023&44.6107\\
			20&200&6.0524&7.2905&13.8990&37.2770\\
			30&200&4.6809&5.4556&11.3208&30.3724\\
			50&200&3.7099&4.0746&8.5270&22.7902\\
			100&200&2.5898&2.8977&6.5092&16.0387\\
			\hline
		\end{tabular}	
	\end{table}
	
	\begin{table}[hbt!]
		\caption{\label{table:3}\textbf{\small Relative mean Euclidean distance between estimate $\hat{\bm{\Sigma}}_{2} \otimes \hat{\bm{\Sigma}}_{1}$  and the parameter  $\bm{\Sigma}_{2} \otimes \bm{\Sigma}_{1}$, (Relative mean Euclidean distance is given as dividing the mean Euclidean distance by norm of the parameter), i.e. $\frac{\| \hat{\bm{\Sigma}}_{2} \otimes \hat{\bm{\Sigma}}_{1} - \bm{\Sigma}_{2} \otimes \bm{\Sigma}_{1}\|_{2,m}}{\| \bm{\Sigma}_{2} \otimes \bm{\Sigma}_{1} \|_{2} }$.}}
		\centering
		\begin{tabular}{|c|c|c|c|c|c|}
			\hline
			\textbf{N} & \textbf{s} & \textbf{Case 1}& \textbf{Case 2} & \textbf{Case 3}& \textbf{Case 4}\\
			\hline
			5&200&1.0744&0.9366&0.8630&0.7982\\
			10&200&0.6218&0.6119&0.5823&0.4895\\
			15&200&0.5039&0.4684&0.4336&0.4095\\
			20&200&0.4223&0.4204&0.3463&0.3391\\
			30&200&0.3266&0.3146&0.2820&0.2763\\
			50&200&0.2588&0.2349&0.2124&0.2073\\
			100&200&0.1807&0.1671&0.1622&0.1459\\
			\hline
		\end{tabular}
	\end{table} 

	\begin{figure}[hbt!]
		\centering 
		\includegraphics[width=1\linewidth]{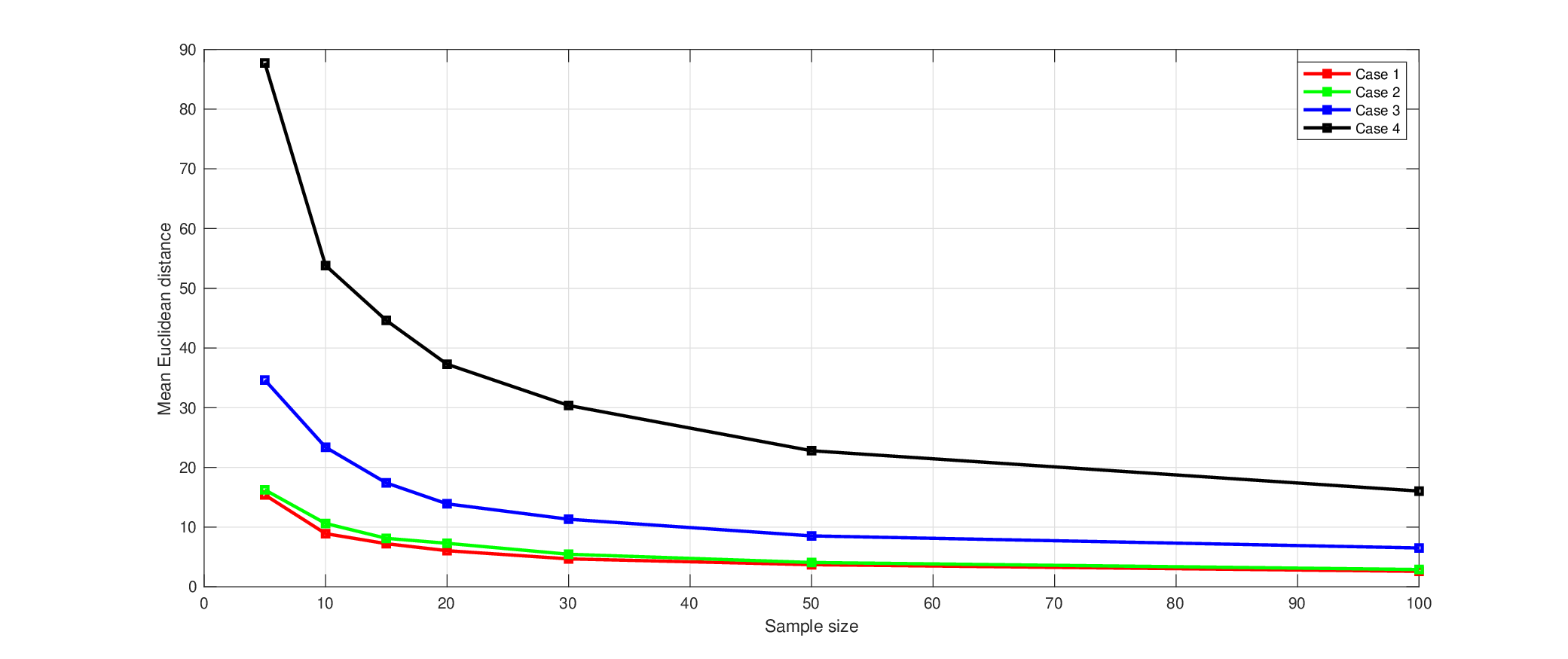}
		\caption{\small \textbf{Mean Euclidean distance between the estimate and the parameter, with respect to the sample size for all four cases.}}
		\label{fig:figure2a}
		
		\centering
		\includegraphics[width=1\linewidth]{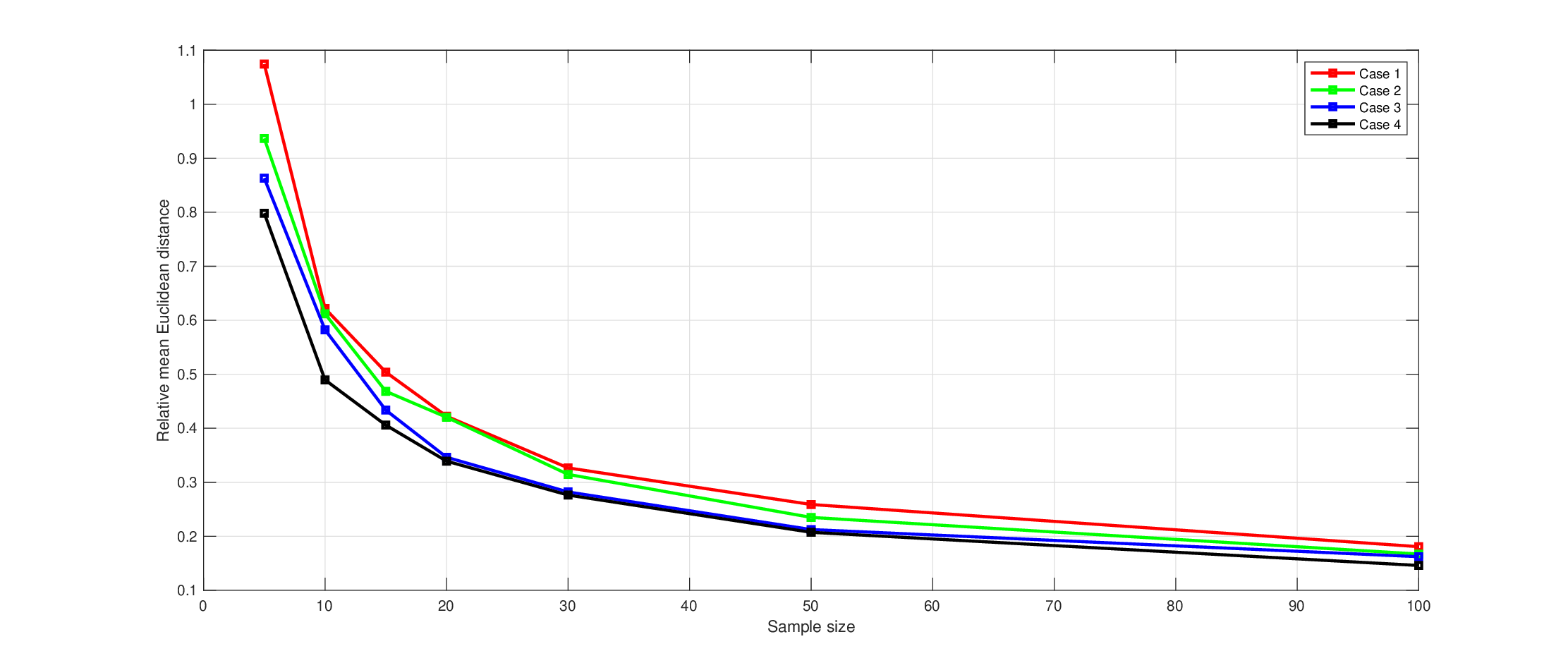}
		\caption{\small \textbf{Relative mean Euclidean distance between the estimate and the parameter, with respect to the sample size for all four cases.}}
		\label{fig:figure2b}
	\end{figure}

	\begin{enumerate}[label=\textbullet]
		\item The initial estimates of $\bm{\Sigma}_{1}$ and $\bm{\Sigma}_{2}$  are taken as $\hat{\bm{\Sigma}}_{1} ^{(0)}$ and $\hat{\bm{\Sigma}}_{2} ^{(0)}$, depending on the samples. Using the initial values $\hat{\bm{\Sigma}}_{1} ^{(0)}$ and $\hat{\bm{\Sigma}}_{2} ^{(0)}$ other than these may affect the number of iterations to meet the stopping criterion. There is a slight difference in the mean number of iterations between case 1 and case 2. 
		
		\item The figure \ref{fig:figure1a} shows that the empirical bias of the estimator, defined as the Euclidean distance between the empirical mean of estimates over $s$ simulation runs and the parameter, decreases as sample size $N$ increases for all the four cases.
		
		\item Relative empirical bias is obtained by dividing the empirical bias to the Euclidean norm of the parameter.  Relative empirical bias is also asymptotically decreases to zero for all the cases, as shown in the figure \ref{fig:figure1b}. If we compare all the cases, in the figure \ref{fig:figure1a}, Case $4$ has the largest bias and Case $1$ has the lowest, but in the figure \ref{fig:figure1b}, Case $4$ has lowest relative bias and Case $1$ has largest relative bias. This happens becuase of the scale or unit of the true parameters, which is measured by their norms. Case $4$ has large scale and Case $1$ has small scale. If we see the empirical bias, it says that Case $1$ has the better estimator as compared to Case $4$, but Case $4$ has the better accurate estimator as compared to Case $1$.

		\item  The empirical bias defined above is a global measure, or it gives the overall error of the estimate, not the individual of $\bm{\Sigma_{1}}$ and $\bm{\Sigma_{2}}$. Since it is measured by the norm, it always takes positive values. Therefore, it does not reveal the individual estimates, whether they are overestimated or underestimated.

		\item The mean Euclidean distance between the estimate and the parameter provides the overall accuracy of the estimator around the parameter. From the table \ref{table:2} and the figure \ref{fig:figure2a}, it is observed that the mean Euclidean distance decreases as the sample size increases for all the cases. So, $ \hat{\bm{\Sigma}}_{2} \otimes \hat{\bm{\Sigma}}_{1}$ can be considered as a consistent estimator of $\bm{\Sigma}_{2} \otimes \bm{\Sigma}_{1}$. 
		
		\item The relative mean Euclidean distance is obtained by dividing the mean Euclidean distance by the norm of the parameter. In the table \ref{table:3} and the figure \ref{fig:figure2b}, it is observed that the relative mean Euclidean distance is approaching zero as the sample size increases. The table \ref{table:2} shows that Case $4$ has the largest value of the mean Euclidean distance, and Case $1$ has the lowest value of the mean Euclidean distance. After the relative result, the table \ref{table:3} shows the lowest value of the mean Euclidean distance for Case $4$ and the largest value of the mean Euclidean distance for Case $1$, which emphasizes the use of relative the mean Euclidean distance rather than simple the mean Euclidean distance for the measurement of the performance of estimators.
	\end{enumerate}
	Thus, the proposed algorithm for the matrix variate symmetric Laplace distribution estimates $\bm{\Sigma}_{2} \otimes \bm{\Sigma}_{1}$ for all four structures considered in nominal iterations.
	
	The proposed algorithm for the matrix variate symmetric Laplace distribution can be applied to estimate the parameter $\bm{\Sigma}$ of multivariate symmetric Laplace distribution, if $\bm{\Sigma}$ can be decomposed into $\bm{\Sigma}_{2} \otimes \bm{\Sigma}_{1}$, where, $\bm{\Sigma}_{1}, \bm{\Sigma}_{2}$ are positive definite matrices, even when the sample size is small.
	 \begin{note}
		From the characteristic function \ref{thm:4}, the parameters $\bm{\Sigma}_{1}$ and $\bm{\Sigma}_{2}$ are defined up to a positive multiplicative constant, that is, 
		$\left(\bm{\Sigma}_{1}, \bm{\Sigma}_{2}\right)$ and $\left(a \bm{\Sigma}_{1}, (1/a) \bm{\Sigma}_{2}\right)$ with $a>0$ follows the same distribution. Thus, we may get estimates $\left(\hat{\bm{\Sigma}}_{1}, \hat{\bm{\Sigma}}_{2}\right)$ which are actually estimating $\left(a \bm{\Sigma}_{1}, (1/a) \bm{\Sigma}_{2} \right)$ with $a>0$ rather than $\left(\bm{\Sigma}_{1},\bm{\Sigma}_{2}\right)$, but the Kronecker product $\hat{\bm{\Sigma}}_{2}\otimes \hat{\bm{\Sigma}}_{1}$ is estimating $\bm{\Sigma}_{2}\otimes \bm{\Sigma}_{1}$. 
	\end{note} 
	
	\section{Conclusion}
	In this paper, the maximum likelihood estimators of the parameters of both multivariate and matrix variate symmetric Laplace distributions are proposed using the EM algorithm. The existence conditions of the proposed estimators of both multivariate and matrix variate symmetric Laplace distribution are also given. The performance of the proposed EM estimator is compared with another estimator of the multivariate symmetric Laplace distribution by evaluating bias and mean Euclidean distance of these estimators. The results indicate that the EM estimator is more consistent. Furthermore, the performance of proposed estimators of the matrix variate symmetric Laplace distribution is evaluated using two metrics, the empirical bias and mean Euclidean distance of the Kronecker product of estimators. This evaluation is conducted across four different structures using simulated data sets. This simulation study reveals that the empirical bias decreases in all the cases as the sample size increases. Similarly, the mean Euclidean distance decreases with a large sample size, indicating that the estimator can be considered consistent. However, this simulation study does not address individual estimators of $\bm{\Sigma_{1}}$ and $\bm{\Sigma_{2}}$. Based on this simulation study, the matrix variate symmetric Laplace distribution can be an alternative to the multivariate symmetric Laplace distribution when a small number of sample observations are available. This is especially beneficial when the scale parameter of the multivariate symmetric Laplace distribution can be decomposed as the Kronecker product of two positive definite matrices.

 \noindent \textbf{Acknowledgements:} The first author would like to thank the University Grants Commission, India, for providing financial support.


\begin{thebibliography}{99}
			
            \bibitem{A}O. Arslan, An alternative multivariate skew Laplace distribution: properties and estimation, \emph{Statistical Papers}, \textbf{51}(4), (2010) 865-887.
			\bibitem{B}F. Bowman, \emph{Introduction to Bessel Functions}, Courier Corporation, (2012).
             \bibitem{BA} Y. M. Bulut and O. Arslan, Matrix variate skew laplace distribution, \emph{Sigma Journal of Engineering and Natural Sciences}, \textbf{42}(3), (2024) 854-861.
			\bibitem{CHS} S. Cambanis, S. Huang, and G. Simons, On the theory of elliptically contoured distributions, \emph{Journal of Multivariate Analysis}, \textbf{11}(3), (1981) 368-385.
			\bibitem{Du}P. Dutilleul, The MLE algorithm for the matrix normal distribution, \emph{Journal of statistical computation and simulation}, \textbf{64}(2), (1999) 105-123.
			\bibitem{Dw}P. S. Dwyer, Some applications of matrix derivatives in multivariate analysis, \emph{Journal of the American Statistical Association}, \textbf{62}(318), (1967) 607-625.
			\bibitem{DLR}A. P. Dempster, N. M. Laird, and D. B. Rubin, Maximum likelihood from incomplete data via the EM algorithm, \emph{Journal of the Royal Statistical Society: series B (methodological)}, \textbf{39}(1), (1977) 1-22.
			\bibitem{EKL}T. Eltoft, T. Kim, and T. W. Lee, On the multivariate Laplace distribution, \emph{IEEE Signal Processing Letters}, \textbf{13}(5), (2006) 300-303.
			\bibitem{FM}K. Fragiadakis and S. G. Meintanis, Goodness-of-fit tests for multivariate Laplace distributions, \emph{Mathematical and Computer Modelling}, \textbf{53}(5-6), (2011) 769-779.
			\bibitem{Gr}F. A. Graybill, \emph{Matrices with Applications in Statistics}, Second edition, Wadsworth, Belmont (1983).
			\bibitem{GN}A. K. Gupta and D. K. Nagar,  \emph{Matrix Variate Distributions}, Chapman and Hall/CRC (2018).
			\bibitem{KKP}S. Kotz, T. Kozubowski, and K. Podgórski, \emph{The Laplace distribution and Generalizations: a Revisit with Applications to Communications, Economics, Engineering, and Finance}, Springer Science and Business Media, (2001).
			\bibitem{KMP}T.Kozubowski, S. Mazur, and K. Podgórski, Matrix variate generalized asymmetric Laplace distributions, \emph{Theory of Probability and Mathematical Statistics} \textbf{109}, (2023) 55-80.
			\bibitem{KP}T. J. Kozubowski and K. Podgórski, Asymmetric Laplace laws and modeling financial data, \emph{Mathematical and Computer Modelling}, \textbf{34}(9-11), (2001) 1003-1021.
           \bibitem{KPo} T. J. Kozubowski and K. Podgórski, A multivariate and asymmetric generalization of Laplace distribution. \emph{Computational Statistics}, \textbf{15}(4), (2000) 531-540.
			\bibitem{KPR}T. J. Kozubowski, K. Podgórski, and  I. Rychlik, Multivariate generalized Laplace distribution and related random fields, \emph{Journal of Multivariate Analysis}, \textbf{113},  (2013) 59-72.
			\bibitem{KS}T. Kollo and M. S. Srivastava, Estimation and testing of parameters in multivariate Laplace distribution, \emph{Communications in Statistics-Theory and Methods}, \textbf{33}(10), (2005) 2363-2387.
			\bibitem{MK}GJ. McLachlan and T. Krishnan, \emph{The EM Algorithm and Extensions}, John Wiley and Sons, (2007).
			\bibitem{RC}H. Roger and RJ. Charles, \emph{Topics in matrix analysis}, Cambridge University Press, Cambridge, England, (1994).
			\bibitem{OLBC} F. W. Olver, D. W. Lozier, R. F. Boisvert, and  C. W. Clark, \emph{NIST Handbook of Mathematical Functions}, Cambridge University Press (2010).
			\bibitem{VA}T. Varga,  \emph{Matrix variate elliptically contoured distributions: stochastic representation and inference}, (Doctoral dissertation, Bowling Green State University) (1990).
			\bibitem{Vi}H. Visk, On the parameter estimation of the asymmetric multivariate Laplace distribution, \emph{Communications in Statistics-Theory and Methods}, \textbf{38}(4), (2009) 461-470.
			\bibitem{W}G. N. Watson, \emph{A Treatise on the Theory of Bessel Functions (Vol. 2)}, The University Press, (1922).
			\bibitem{Y}Y. Yurchenko, Matrix variate and tensor variate Laplace distributions, arXiv preprint arXiv:2104.05669 (2021).
			
		\end{thebibliography}
\end{document}